\newtheorem{theorem}{Theorem}
\newtheorem{corollary}[theorem]{Corollary}
\newtheorem{definition}[theorem]{Definition}
\newtheorem{lemma}[theorem]{Lemma}
\newtheorem{proposition}[theorem]{Proposition}
\title{Defensive alliance polynomial}
\author{Hany Ibrahim
	\\University of Applied Sciences Mittweida
}
\begin{document}
\maketitle

\begin{abstract}
We introduce a new bivariate polynomial which we call the \emph{defensive alliance polynomial} and denote it by \emph{$da(G;x,y)$}. It is a generalization of the alliance polynomial \cite{Carb14} and the strong alliance polynomial \cite{Carb16}. We show the relation between $da(G;x,y)$ and the alliance, the strong alliance and the induced connected subgraph \cite{Titt11} polynomials. Then, we investigate information encoded in $da(G;x,y)$ about $G$. We discuss the defensive alliance polynomial for the path graphs, the cycle graphs, the star graphs, the double star graphs, the complete graphs, the complete bipartite graphs, the regular graphs, the wheel graphs, the open wheel graphs, the friendship graphs, the triangular book graphs and the quadrilateral book graphs. Also, we prove that the above classes of graphs are characterized by its defensive alliance polynomial. A relation between induced subgraphs with order three and both subgraphs with order three and size three and two respectively, is proved to characterize the complete bipartite graphs. Finally, we present the defensive alliance polynomial of the graph formed by attaching a vertex to a complete graph. We show two pairs of graphs which are not characterized by the alliance polynomial but characterized by the defensive alliance polynomial.
\end{abstract}

\section{Introduction}
Let $G$ be a simple graph and $S$ be a subset of $V(G)$. $\bar{S}$ is $V(G)\setminus S$. The degree of a vertex $u$ in $S$ denoted by $\delta_{S}(u)$ is $|\{\{u,v\} \in E(G):v \in S\}|$.
An \emph{alliance} is a non-empty subset of $V(G)$. $S$ is \emph{defensive alliance} \cite{Kris02} provided that
\[
\delta_{S}(v) - \delta_{\bar{S}}(v) \geq -1 \; ,\; \forall \; v \in S \text{.}
\]
Further, $S$ is called \index{strong defensive alliance}\emph{strong defensive alliance} provided that:
\[
\delta_{S}(v) - \delta_{\bar{S}}(v) \geq 0 \; , \; \forall \; v \in S \text{.}
\]
The concept can be generalized to the \index{defensive $k$-alliance} \emph{defensive $k$-alliance} \cite{Rodr08}
\[
\delta_{S}(v) - \delta_{\bar{S}} \geq k \; , \forall \; v \in S \; , \;k \text{ is an integer in the range } -\Delta \leq k \leq \Delta \text{.}
\]
Note that for $k=-1$ we get the defensive alliance and for $k=0$ we get the strong defensive alliance. 

We denote by $G[S]$, the subgraph induced by $S$ in the graph $G$, where $S \subseteq V(G)$.
Through this paper, we present the graph polynomials using the form:
\[
\sum_{S \subseteq V(G) }[\, p_{1}(S) \, ][ \, p_{2}(S) \, ] \cdots x^{f_{x}(S)} y^{f_{y}(S)} z^{f_{z}(S)} \cdots \text{, where}
\]

\begin{equation*}
[\, p_{i}(S) \, ] =
\left\{
\begin{array}{ll}
1  & \mbox{if } G[S] \text{ has the property }p_{i} \text{,}\\
0  & \mbox{otherwise.} 
\end{array}
\right.
\end{equation*}
We denote the polynomial of the terms $x^{k}$ in the graph polynomial $da(G;x,y)$ by $ [ x^{k} ] da(G;x,y)$ and the coefficient of the term $x^{k} y^{l}$ by $ [ x^{k} y^{l} ] da(G;x,y)$.
We say that a graph $G$ is characterized by a graph polynomial $f$ if for every graph $G$ such that $f(G) =
f(H)$ we have that $G$ is isomorphic to $H$. The class of graphs $K$ is characterized by a graph polynomial $f$ if
every graph $G \in K$ is characterized by $f$.
Also, when we say a vertex set $S$ contributes a term $t$, we mean the set $S$ induces a connected subgraph $G[S]$ which yields the term $t$ in $da(G;x,y)$.

\section{Definition and relations with other graph polynomials} \label{Section: Definition and relations with other polynomials}
\begin{definition}\label{Definition: defensive alliance polynomial}
	The mappings $f_{x}$ and $f_{y}$ are defined as follows:
	\begin{align*} \label{eq1}		
	& f_{x} :\mathbb{P}(V(G)) \mapsto \mathbb{N} \text{ with }  f_{x}(S) = |S| \text{ and }\\
	& f_{y}:\mathbb{P}(V(G)) \mapsto \mathbb{Z} \text{ with } f_{y}(S) = \min_{u \in S} \{\delta_{S}(u) - \delta_{\bar{S}}(u) + n\} \text{.}	
	\end{align*}
	The \index{defensive alliance polynomial}\emph{defensive alliance polynomial} denoted by \emph{da} is: \label{da definition}
	\[
	da(G;x,y) = \sum_{  \; S \subseteq V(G) }[S \text{ is not empty }]  [ \; G[S] \text{ is connected }]x^{f_{x}(S)} y^{f_{y}(S)}.
	\]
\end{definition}

\subsection{Alliance polynomial}
The \emph{alliance polynomial} defined in \cite{Carb14} denoted by $A$ is:
\[
A(G;y) = \sum_{S \subseteq V(G) }[S \text{ is not empty }][  \; G[S] \text{ is connected }]y^{f_{y}(S)}.
\]

\begin{proposition}
	$A(G;y) \, = \, da(G;1,y) \text{.}$
\end{proposition}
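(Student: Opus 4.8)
The plan is to compare the defining sums of $A(G;y)$ and $da(G;x,y)$ term by term and observe that setting $x=1$ in the latter collapses the extra variable without affecting anything else. First I would write out $da(G;1,y)$ explicitly by substituting $x=1$ into Definition~\ref{Definition: defensive alliance polynomial}, so that every factor $x^{f_x(S)} = 1^{|S|} = 1$ disappears from each summand. What remains is $\sum_{S\subseteq V(G)}[S\text{ is not empty}][\,G[S]\text{ is connected}]\,y^{f_y(S)}$, which is exactly the defining expression for $A(G;y)$ given in Subsection~2.2.

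The only points needing care are that the two polynomials range over the same index set and use the same exponent function $f_y$. Both sums are indexed by $S\subseteq V(G)$ with the same two indicator factors $[S\text{ is not empty}]$ and $[\,G[S]\text{ is connected}\,]$, and the exponent of $y$ is in both cases $f_y(S)=\min_{u\in S}\{\delta_S(u)-\delta_{\bar S}(u)+n\}$ as fixed in Definition~\ref{Definition: defensive alliance polynomial}. So the correspondence $S\mapsto S$ is a bijection between the sets of contributing subsets, and corresponding terms are literally equal after the substitution. Summing over this bijection gives $da(G;1,y)=A(G;y)$.

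Honestly there is no real obstacle here; the statement is essentially definitional, and the ``hard part'' is merely being careful that the empty-set and connectedness conventions in the two definitions coincide, which they do by construction. If one wanted to be maximally explicit, one could note that terms with the same $y$-exponent that get merged in $A(G;y)$ are exactly the images of the terms that get merged (after $x=1$) in $da(G;1,y)$, so no coefficient information is lost or gained. I would therefore present the proof as a one-line substitution argument with a sentence justifying the index-set and exponent match.
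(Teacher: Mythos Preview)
Your argument is correct: the identity is immediate from the definitions once one substitutes $x=1$, and the paper in fact states the proposition without proof, treating it as definitional. Your write-up is, if anything, more explicit than the paper; the only minor slip is the cross-reference (the alliance polynomial is introduced in the subsection preceding the strong alliance polynomial, not in Subsection~2.2).
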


\subsection{Strong alliance polynomial}
\begin{proposition}\label{Definition: strong alliance polynomial}
	Let $S$ be a non-empty subset of $V(G)$ which induces a connected subgraph in $G$. $S$ is \emph{strong defensive alliance} if $f_{y}(S) \geq n$.
\end{proposition}
\begin{definition}
	The \emph{strong alliance polynomial} defined in \cite{Carb16} denoted by  \emph{$a$} is:
	\begin{align*}
	a(G;x) = \sum_{S \subseteq V(G) } &[S \text{ is not empty }][ \; G[S] \text{ is connected }]\\
	&[ S \text{ is strong defensive alliance }]x^{f_{x}(S)}\\
	\end{align*}
	
\end{definition}
\begin{proposition}
	$a(G;x) = \sum_{k=0}^{n-1}[y^{n+k}]da(G;x,y) \text{.}$
\end{proposition}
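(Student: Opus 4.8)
The plan is to unwind both sides of the claimed identity $a(G;x) = \sum_{k=0}^{n-1}[y^{n+k}]da(G;x,y)$ by comparing, monomial by monomial, the contributions indexed by vertex subsets $S \subseteq V(G)$. Both polynomials are sums over the same index set—non-empty $S$ inducing a connected subgraph $G[S]$—so the natural approach is to show that a given $S$ contributes $x^{f_x(S)}$ to the right-hand side exactly when it contributes $x^{f_x(S)}$ to $a(G;x)$, i.e. exactly when $S$ is a strong defensive alliance.

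First I would fix such an $S$ and recall that, by definition, $S$ contributes the single monomial $x^{f_x(S)}y^{f_y(S)}$ to $da(G;x,y)$. Hence the total exponent of $y$ coming from $S$ is precisely $f_y(S) = \min_{u\in S}\{\delta_S(u)-\delta_{\bar S}(u)+n\}$. The key elementary bound is that $\delta_S(u)-\delta_{\bar S}(u) \le \delta_S(u) \le |S|-1 \le n-1$ for every $u\in S$ (using that $G[S]$ is a simple graph on $|S|\le n$ vertices), so $f_y(S) \le 2n-1$ always; and trivially $f_y(S)\ge \min_u\{-\delta_{\bar S}(u)+n\}\ge n-\Delta > -\infty$, but more usefully $f_y(S)$ can be any integer. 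I would then invoke Proposition~\ref{Definition: strong alliance polynomial}, which states exactly that $S$ is a strong defensive alliance iff $f_y(S)\ge n$. Combining this with the upper bound $f_y(S)\le 2n-1$, the exponent $f_y(S)$ lies in $\{n, n+1, \dots, 2n-1\} = \{n+k : 0\le k\le n-1\}$ precisely when $S$ is a strong defensive alliance.

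Next I would assemble the count: $\sum_{k=0}^{n-1}[y^{n+k}]da(G;x,y)$ collects exactly those monomials of $da(G;x,y)$ whose $y$-exponent is in $\{n,\dots,2n-1\}$, and by the previous paragraph these are contributed precisely by the strong defensive alliances $S$, each contributing $x^{f_x(S)} = x^{|S|}$ (after setting the $y$-part aside, which is what extracting $[y^{n+k}]$ does). That is literally the defining sum for $a(G;x)$. Reading the equality in the other direction requires only the observation that no strong-defensive-alliance $S$ is lost, since its $y$-exponent cannot exceed $2n-1$.

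I expect the only subtle point—hardly an obstacle, but the step deserving care—is the uniform upper bound $f_y(S)\le 2n-1$, which guarantees that truncating the sum at $k=n-1$ on the right-hand side discards nothing: one must check $\delta_S(u)-\delta_{\bar S}(u)+n \le n-1+n$ for the minimizing $u$, equivalently $\delta_S(u)\le n-1$ when $\delta_{\bar S}(u)=0$, which is immediate. Everything else is bookkeeping: matching index sets, using $f_x(S)=|S|$, and citing Proposition~\ref{Definition: strong alliance polynomial} for the equivalence "strong defensive alliance $\iff f_y(S)\ge n$."
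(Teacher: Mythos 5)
Your argument is correct: the equivalence ``$S$ is a strong defensive alliance $\iff f_y(S)\ge n$'' (Proposition~\ref{Definition: strong alliance polynomial}) together with the uniform bound $f_y(S)\le 2n-1$ (from $\delta_S(u)\le |S|-1\le n-1$ and $\delta_{\bar S}(u)\ge 0$) shows that the coefficients $[y^{n+k}]$ for $0\le k\le n-1$ capture exactly the monomials $x^{|S|}$ coming from strong defensive alliances, which is the defining sum of $a(G;x)$. The paper states this proposition without proof, and your write-up supplies precisely the intended justification, including the one point that genuinely needs checking, namely that truncating the sum at $k=n-1$ loses nothing.
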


\subsection{Induced connected subgraph polynomial}
\begin{definition}\label{Definition: Induced connected subgraph polynomial}
	The \index{induced connected subgraph polynomial}\emph{induced connected subgraph polynomial} defined in \cite{Titt11} denoted by \emph{q} is:
	\[
	q(G;x) = \sum_{S \subseteq V(G) }[S \text{ is not empty }][  \; G[S] \text{ is connected }]x^{f_{x}(S)} \text{.}
	\]
\end{definition}
\begin{proposition}
	$q(G;x) = da(G;x,1) \text{.}$
\end{proposition}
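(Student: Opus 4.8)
The plan is to prove the identity by a term-by-term substitution of $y=1$ into the definition of $da(G;x,y)$. Recall from Definition~\ref{Definition: defensive alliance polynomial} that
\[
da(G;x,y) = \sum_{S \subseteq V(G)}[S \text{ is not empty }][\,G[S]\text{ is connected }]\,x^{f_{x}(S)} y^{f_{y}(S)},
\]
so it suffices to observe that for every $S \subseteq V(G)$ the factor $y^{f_{y}(S)}$ becomes $1^{f_{y}(S)} = 1$ once we set $y = 1$, regardless of the (integer) value of $f_{y}(S)$. Hence the $S$-th summand specializes to $[S \text{ is not empty }][\,G[S]\text{ is connected }]\,x^{f_{x}(S)}$, and summing over all $S \subseteq V(G)$ reproduces verbatim the right-hand side of Definition~\ref{Definition: Induced connected subgraph polynomial}, namely $q(G;x)$.

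The only point that warrants a word of care is that $f_{y}$ takes values in $\mathbb{Z}$, so a priori $y^{f_{y}(S)}$ might be a Laurent monomial rather than an ordinary one; I would simply remark that $1^{k} = 1$ for every $k \in \mathbb{Z}$, so the evaluation at $y=1$ is unambiguous and the collapse above is valid. (One may additionally note that in fact $f_{y}(S) = \min_{u \in S}\{\delta_{S}(u) - \delta_{\bar S}(u) + n\} \geq 1$, because $\delta_{S}(u) \geq 0$ and $\delta_{\bar S}(u) \leq n-1$, so $da(G;x,y)$ is a genuine polynomial; but this is not needed for the statement.)

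I do not anticipate any real obstacle here: the proposition is an immediate specialization, and the entire content is matching the two defining sums after the substitution. The ``hardest'' part is purely notational — making sure the indicator brackets $[S \text{ is not empty }]$ and $[\,G[S]\text{ is connected }]$ in the two definitions are literally the same, so that no terms are gained or lost in passing from $da(G;x,1)$ to $q(G;x)$.
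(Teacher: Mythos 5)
Your proof is correct and matches the paper's treatment: the paper states this proposition without proof precisely because it is the immediate specialization $y=1$ that you carry out, and your check that $1^{k}=1$ for all integers $k$ (plus the observation that $f_{y}(S)\geq 1$ anyway) covers the only point that could conceivably need care.
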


\section{Properties} \label{Section: Properties}
\begin{proposition}\label{Proposition: number of connected induced subgraphs}
	The number of connected induced subgraphs of order $k$ is \\ $[x^{k}]da(G;x,1)$.
\end{proposition}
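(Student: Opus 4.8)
The plan is to unwind the definitions and observe that the statement is essentially a bookkeeping identity. First I would invoke the proposition already established above, namely $q(G;x) = da(G;x,1)$, so that it suffices to extract the coefficient of $x^{k}$ from $q(G;x)$ rather than from $da(G;x,1)$ directly; alternatively one can set $y = 1$ in the definition of $da(G;x,y)$ from Definition~\ref{Definition: defensive alliance polynomial} and note that the factor $y^{f_{y}(S)}$ collapses to $1$ for every $S$, so that
\[
da(G;x,1) = \sum_{S \subseteq V(G)}[S \text{ is not empty }][\,G[S]\text{ is connected }]\,x^{f_{x}(S)}.
\]

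Next I would recall that $f_{x}(S) = |S|$ by definition, so grouping the sum above according to the value of $|S|$ gives
\[
da(G;x,1) = \sum_{k \geq 1}\Big(\sum_{\substack{S \subseteq V(G)\\ |S| = k}}[\,G[S]\text{ is connected }]\Big)x^{k},
\]
whence $[x^{k}]da(G;x,1)$ equals the number of non-empty vertex subsets $S$ of size exactly $k$ for which $G[S]$ is connected.

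The only remaining point is the (routine) identification of such subsets with connected induced subgraphs of order $k$: a connected induced subgraph of $G$ is, by definition, of the form $G[S]$ for some $S \subseteq V(G)$, and two subsets $S \neq S'$ give distinct induced subgraphs since they have distinct vertex sets; the order of $G[S]$ is $|S|$. Hence $S \mapsto G[S]$ is a bijection between the subsets counted above and the connected induced subgraphs of order $k$, which yields the claim. I do not expect any genuine obstacle here; the mild care needed is simply to be explicit that "connected induced subgraph of order $k$" and "size-$k$ vertex subset inducing a connected subgraph" are two descriptions of the same objects, and that the empty set is excluded on both sides since $k \geq 1$.
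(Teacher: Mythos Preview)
Your proof is correct and follows essentially the same approach as the paper's own proof: substitute $y=1$ so that each connected $S$ contributes $x^{|S|}$, then read off the coefficient of $x^{k}$ as the number of size-$k$ subsets inducing a connected subgraph. You are somewhat more explicit (invoking $q(G;x)=da(G;x,1)$ and spelling out the bijection $S\mapsto G[S]$), but the underlying argument is identical.
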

\begin{proof}
	A set $S$ where $S \subseteq V(G)$, contributes a term with $f_{x}(S)=k$ if and only if $S$ induces a connected subgraph in $G$ and $|S|=k$. By substituting $y=1$ in $da(G;x,1)$, we sum the terms with the similar exponent of $x$. Hence, the coefficient of $x^{k}$ in $da(G;x,1)$ is the number of connected induced subgraphs of order $k$ in $G$.
\end{proof}

\begin{proposition}\label{Proposition: Order}
	The order of $G$ is $[x^{1}]da(G;x,1)$.
\end{proposition}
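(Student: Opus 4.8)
The plan is to reduce the statement to Proposition~\ref{Proposition: number of connected induced subgraphs} by taking $k=1$. First I would recall that $[x^{k}]da(G;x,1)$ counts the connected induced subgraphs of $G$ of order $k$. Applying this with $k=1$, the quantity $[x^{1}]da(G;x,1)$ is the number of connected induced subgraphs on exactly one vertex.

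Next I would observe that every singleton $S=\{v\}$ with $v\in V(G)$ induces the one-vertex graph, which is trivially connected, and conversely every connected induced subgraph of order $1$ arises from a unique such singleton. Hence the connected induced subgraphs of order $1$ are in bijection with $V(G)$, so their number equals $|V(G)|=n$, the order of $G$. Combining this with the previous paragraph gives $[x^{1}]da(G;x,1)=n$, which is the claim.

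There is essentially no obstacle here: the only thing to be careful about is the convention that the one-vertex graph counts as connected (so that singletons do contribute), and that the empty set is excluded by the indicator $[S\text{ is not empty}]$ in the definition of $da$, so it does not spuriously affect the $x^{1}$ term anyway. One could alternatively argue directly from Definition~\ref{Definition: defensive alliance polynomial}: the terms with $f_{x}(S)=|S|=1$ are exactly those indexed by $S=\{v\}$, each contributing $x^{1}y^{f_{y}(\{v\})}$, and setting $y=1$ collapses all of these to $x^{1}$ with total coefficient $n$. I would present the short reduction via Proposition~\ref{Proposition: number of connected induced subgraphs} as the main line and mention the direct computation as a remark if space permits.
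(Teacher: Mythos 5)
Your proposal is correct and follows the paper's own argument exactly: both apply Proposition~\ref{Proposition: number of connected induced subgraphs} with $k=1$ and identify the order-one connected induced subgraphs with the vertices of $G$. The extra remarks about singletons being connected and the alternative direct computation are fine but not needed.
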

\begin{proof}
	By putting $k=1$ in Proposition \ref{Proposition: number of connected induced subgraphs} we get $[x^{1}]da(G;x,1)$ as the number of connected subgraphs of order one, hence the order of $G$.
\end{proof}

\begin{proposition}\label{Proposition: Size}
	The size of $G$ is $[x^{2}]da(G;x,1)$.
\end{proposition}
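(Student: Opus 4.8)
The plan is to reduce the statement immediately to Proposition \ref{Proposition: number of connected induced subgraphs}, which already identifies $[x^{k}]da(G;x,1)$ as the number of connected induced subgraphs of $G$ of order $k$. Setting $k=2$, it therefore suffices to show that the number of connected induced subgraphs of order $2$ equals the size $|E(G)|$ of $G$.

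For that step I would argue via the obvious correspondence between edges and two-element vertex sets inducing a connected subgraph. Given $S=\{u,v\}\subseteq V(G)$ with $u\neq v$, the induced subgraph $G[S]$ has exactly two vertices; it is connected if and only if those two vertices are joined by an edge, i.e. if and only if $\{u,v\}\in E(G)$. Conversely every edge $\{u,v\}\in E(G)$ yields such a set. Hence $S\mapsto\{u,v\}$ is a bijection between the connected induced subgraphs of order $2$ and the edges of $G$, so their number is $|E(G)|$.

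Combining the two observations: $[x^{2}]da(G;x,1)$ counts the connected induced subgraphs of order $2$ by Proposition \ref{Proposition: number of connected induced subgraphs}, and that count is $|E(G)|$, the size of $G$. I do not anticipate any genuine obstacle here; the only thing to be careful about is invoking the simplicity of $G$ (no loops or multiple edges), which is assumed throughout, so that a connected two-vertex induced subgraph carries exactly one edge and the correspondence with $E(G)$ is exact.
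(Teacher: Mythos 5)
Your proposal is correct and follows the paper's own argument exactly: specialize Proposition \ref{Proposition: number of connected induced subgraphs} to $k=2$ and identify connected induced subgraphs of order two with edges. The paper states this identification without elaboration, while you spell out the bijection, but the route is the same.
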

\begin{proof}
	By putting $k=2$ in Proposition \ref{Proposition: number of connected induced subgraphs} we get $[x^{2}]da(G;x,1)$ as the number of connected subgraphs of order two, hence the size of $G$.
\end{proof}

\begin{proposition}\label{Proposition: Connected}
	$G$ is connected if and only if $deg_{x} ( da(G;x,y) )=n$.
\end{proposition}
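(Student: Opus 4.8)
The plan is to unwind the definition of $\deg_{x}$ in terms of the sets $S$ that actually contribute monomials to $da(G;x,y)$. By Definition~\ref{Definition: defensive alliance polynomial}, every non-empty $S \subseteq V(G)$ with $G[S]$ connected contributes a monomial $x^{f_{x}(S)}y^{f_{y}(S)} = x^{|S|}y^{f_{y}(S)}$, and since $f_{x}(S) = |S|$ the contributions of distinct such sets can only reinforce, never cancel, the coefficient of a fixed power of $x$; equivalently, all coefficients of $da(G;x,1)$ are non-negative integers, as already recorded in Proposition~\ref{Proposition: number of connected induced subgraphs}. Consequently $\deg_{x}(da(G;x,y))$ equals the largest value of $|S|$ taken over all non-empty $S$ with $G[S]$ connected, and in particular $\deg_{x}(da(G;x,y)) \le n$ always holds, because every such $S$ satisfies $|S| \le |V(G)| = n$. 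I would state this reduction first, citing Proposition~\ref{Proposition: number of connected induced subgraphs} (with $y=1$) so as not to re-derive the no-cancellation remark.

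For the forward implication I would argue: if $G$ is connected, then $S = V(G)$ is itself a non-empty set inducing the connected subgraph $G[V(G)] = G$, so it contributes a monomial with $x$-exponent $|V(G)| = n$. Combined with the bound $\deg_{x}(da(G;x,y)) \le n$ from the previous paragraph, this forces $\deg_{x}(da(G;x,y)) = n$.

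For the converse I would argue contrapositively at the level of sets: if $\deg_{x}(da(G;x,y)) = n$, then by the reduction there exists a non-empty $S \subseteq V(G)$ with $G[S]$ connected and $|S| = f_{x}(S) = n$. But the only subset of $V(G)$ of cardinality $n$ is $V(G)$ itself, so $G[S] = G[V(G)] = G$ is connected. This closes the equivalence.

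The whole argument is essentially bookkeeping, and I do not expect a genuine obstacle; the single point deserving a moment's care is the claim that no cancellation can depress $\deg_{x}$ below $\max\{|S|\}$, which holds because the $x$-exponent of each monomial is literally $|S| \ge 1$, so every coefficient of $da(G;x,1)$ is strictly positive — this is implicit in Proposition~\ref{Proposition: number of connected induced subgraphs} and I would simply invoke it rather than re-prove it.
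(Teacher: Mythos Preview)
Your proof is correct and follows essentially the same route as the paper's: both directions rest on the observation that the only subset of $V(G)$ of cardinality $n$ is $V(G)$ itself, so a term with $x$-exponent $n$ appears if and only if $G[V(G)]=G$ is connected. You are simply more explicit than the paper about why no cancellation can suppress this term (invoking Proposition~\ref{Proposition: number of connected induced subgraphs}), whereas the paper leaves that implicit.
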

\begin{proof}
	If $G$ is connected, then $V(G)$ contributes the term $x^{n}y^{f_{y}(V(G))}$. Since $G$ has only one subset of vertcies with cardinality $n$ this implies $deg_{x} ( da(G;x,y) )=n$.
	
	Now we prove the converse. If there exists a term in $da(G;x,y)$ where the exponent of $x$ equals $n$, then there exists a connected induced subgraph with order $n$. Since $V(G)$ is the only such subgraph, therefore $G$ is connected.
\end{proof}

\begin{proposition}\label{Proposition: Degree sequence}
	Let $k$ be an integer in the range $0 \leq k \leq n-1$. The number of vertices in $G$ with a degree $k$ is $[xy^{n-k}]da(G;x,y)$. Hence the degree sequence of $G$ can be obtained.
\end{proposition}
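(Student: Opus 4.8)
The plan is to identify exactly which vertex subsets $S$ can contribute a term of the form $x^{1}y^{l}$ to $da(G;x,y)$, and then to read off the exponent of $y$ attached to each such subset. Since $f_{x}(S)=|S|$, a subset contributes a term with exponent $1$ on $x$ if and only if $|S|=1$, that is, $S=\{v\}$ for some $v\in V(G)$. Every such singleton is non-empty and induces the (trivially connected) one-vertex subgraph, so each singleton indeed contributes a term to the polynomial.

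The next step is to compute $f_{y}(\{v\})$. The only element of $S=\{v\}$ is $v$ itself, and $v$ has no neighbour inside $S$, so $\delta_{S}(v)=0$; on the other hand every neighbour of $v$ lies in $\bar{S}$, so $\delta_{\bar{S}}(v)=\deg(v)$. Hence
\[
f_{y}(\{v\})=\min_{u\in S}\{\delta_{S}(u)-\delta_{\bar{S}}(u)+n\}=0-\deg(v)+n=n-\deg(v),
\]
and therefore $\{v\}$ contributes precisely the monomial $x\,y^{\,n-\deg(v)}$.

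Combining these two observations, the coefficient $[xy^{n-k}]da(G;x,y)$ equals the number of vertices $v$ with $n-\deg(v)=n-k$, i.e. with $\deg(v)=k$, which is the claimed count. Finally, since in a simple graph on $n$ vertices every vertex degree lies in $\{0,1,\dots,n-1\}$, letting $k$ run over this range and recording each coefficient $[xy^{n-k}]da(G;x,y)$ yields the full multiset of vertex degrees, i.e. the degree sequence; here $n$ itself is available as the order of $G$ via $[x^{1}]da(G;x,1)$ by Proposition \ref{Proposition: Order}.

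As for difficulty, there is essentially no obstacle: the only points needing care are the bookkeeping argument that no subset other than a singleton contributes to the $x^{1}$-part of $da(G;x,y)$, and the correct evaluation of $\delta_{S}$ and $\delta_{\bar{S}}$ on a one-element set, so that the $y$-exponent is read as $n-\deg(v)$ rather than, say, $\deg(v)$.
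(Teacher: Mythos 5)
Your argument is correct and follows the same route as the paper: singleton sets $\{v\}$ are exactly the subsets contributing $x^{1}$-terms, and each contributes $xy^{n-\deg(v)}$, so the coefficient of $xy^{n-k}$ counts the vertices of degree $k$. You merely spell out the computation of $f_{y}(\{v\})$ and the degree range more explicitly than the paper does.
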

\begin{proof}
	Let $v$ be a vertex in $G$. The set $\{v\}$ induces a connected subgraph in $G$ which contributes the term $xy^{n-deg(v)}$ in $da(G;x,y)$. Hence $[xy^{n-deg(v)}]da(G;x,y)$ yields the number of all vertices with degree equal to $deg(v)$.
\end{proof}

\begin{proposition}
	Let $G$ be a simple graph. The maximum order of a component of $G$ is $deg(da(G;x,1))$. Further, the number of components with maximum order $c$ is $[x^{c}]da(G;x,1)$.
\end{proposition}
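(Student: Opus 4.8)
The plan is to reduce both assertions to Proposition~\ref{Proposition: number of connected induced subgraphs}, which already tells us that $[x^{k}]da(G;x,1)$ counts the connected induced subgraphs of $G$ of order $k$. The whole argument then rests on one elementary observation: if $S \subseteq V(G)$ induces a connected subgraph $G[S]$, then $S$ is contained in a single connected component of $G$. Let $c$ denote the maximum order of a component of $G$.

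First I would establish $deg(da(G;x,1)) = c$. For the upper bound, any $S$ contributing a term $x^{|S|}$ to $da(G;x,1)$ induces a connected subgraph, hence lies inside some component $C$, so $|S| \le |V(C)| \le c$; thus no exponent of $x$ exceeds $c$. For the matching lower bound, pick a component $C$ with $|V(C)| = c$; then $G[V(C)] = C$ is connected, so $V(C)$ contributes the term $x^{c}$. Hence the largest exponent of $x$ in $da(G;x,1)$ is exactly $c$.

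Next I would compute $[x^{c}]da(G;x,1)$. By Proposition~\ref{Proposition: number of connected induced subgraphs} this coefficient is the number of sets $S$ with $|S| = c$ that induce a connected subgraph. As before, such an $S$ lies in a component $C$, and then $c = |S| \le |V(C)| \le c$ forces $|V(C)| = c$ and $S = V(C)$. Conversely, every component of order $c$ is itself connected and induced, contributing exactly one such set. Therefore $[x^{c}]da(G;x,1)$ equals the number of components of $G$ of order $c$, i.e. the number of components of maximum order.

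There is no genuinely difficult step; the only point requiring care is the squeeze $c = |S| \le |V(C)| \le c$, which is precisely what upgrades ``$S$ is contained in some component'' to ``$S$ \emph{is} a component'', and which uses the maximality built into the choice of $c$. Everything else is a direct application of the coefficient interpretation established earlier in this section.
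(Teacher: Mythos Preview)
Your proof is correct and follows essentially the same approach as the paper: both arguments identify the terms of degree $c$ in $x$ with vertex sets of maximum-order components via the connected-induced-subgraph interpretation. Your version is in fact more careful than the paper's, which asserts the first part directly ``from the definition'' and, for the second, only verifies that each maximum component contributes a term $x^{c}$ without explicitly arguing the converse (your squeeze $c=|S|\le |V(C)|\le c$).
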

\begin{proof}
	From the definition of the defensive alliance polynomial, we can see that $deg(da(G;x,1))$ is the order of the maximum component of $G$. Let $c=deg(da(G;x,1))$ and $A=\{ S: |S|=c \text{ and } S \text{ induces a component in } G\}$. Every set $S$ in $A$ contributes a term $x^{c}y^{f_{y}(S)}$ in $da(G;x,y)$. The number of these terms is $|A|$ which can be obtained from $[x^{c}]da(G;x,1)$.
\end{proof}

A vertex in $G$ whose removal results in increase of the number of components of $G$ is a \emph{cut vertex}.
\begin{proposition}\label{Proposition: number of cut vertices}
	Let $G$ be a simple connected graph. The number of cut vertices in $G$ is $n-[x^{n-1}]da(G;x,1)$.
\end{proposition}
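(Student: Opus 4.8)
The plan is to count, for a connected graph $G$ on $n$ vertices, the number of connected induced subgraphs of order $n-1$, and to show this equals $n$ minus the number of cut vertices. A connected induced subgraph of order $n-1$ is exactly a set of the form $V(G)\setminus\{v\}$ for some vertex $v$, so the quantity $[x^{n-1}]da(G;x,1)$ counts the vertices $v$ for which $G[V(G)\setminus\{v\}] = G - v$ is connected. By definition, $G-v$ fails to be connected precisely when $v$ is a cut vertex. Hence $[x^{n-1}]da(G;x,1)$ is the number of non-cut vertices, which is $n$ minus the number of cut vertices, and rearranging gives the claim.

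First I would invoke Proposition \ref{Proposition: number of connected induced subgraphs} with $k = n-1$: this immediately identifies $[x^{n-1}]da(G;x,1)$ as the number of connected induced subgraphs of $G$ of order $n-1$. Next I would observe that any vertex subset $S$ with $|S| = n-1$ is determined by the single vertex it omits, i.e.\ $S = V(G)\setminus\{v\}$, so these subgraphs are exactly the vertex-deleted subgraphs $G - v$ as $v$ ranges over $V(G)$. Then I would split $V(G)$ into cut vertices and non-cut vertices: $G-v$ is connected iff $v$ is not a cut vertex (using that $G$ itself is connected, so deleting a non-cut vertex keeps it connected, while deleting a cut vertex disconnects it by definition). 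Finally, the count of non-cut vertices is $n$ minus the number of cut vertices, and since $[x^{n-1}]da(G;x,1)$ equals this count, solving for the number of cut vertices yields $n - [x^{n-1}]da(G;x,1)$.

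There is essentially no technical obstacle here; the only point requiring a moment's care is the bijection between order-$(n-1)$ connected induced subgraphs and non-cut vertices, and in particular noting that every such subgraph arises as $G-v$ for a unique $v$ (so there is no overcounting) and that connectivity of $G$ is needed for the equivalence ``$G-v$ connected $\iff$ $v$ not a cut vertex'' to hold in the stated direction. Once that correspondence is spelled out, the formula follows by a one-line subtraction.
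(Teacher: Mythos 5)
Your proof is correct and follows essentially the same route as the paper: both identify the order-$(n-1)$ connected induced subgraphs with the vertex-deleted subgraphs $G-v$ and use that $G-v$ is connected iff $v$ is not a cut vertex. Your write-up is in fact slightly more careful than the paper's in making the bijection and the role of connectivity of $G$ explicit.
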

\begin{proof}
	Let $v$ be a vertex in $V(G)$, every subset of $V(G)\setminus \{ v \}$ contributes a connected subgraph in $G$ if and only if $v$ is not a cut vertex. Every such set $V(G) \setminus \{ v \}$, contributes a term in $da(G;x,1)$ where the exponent of $x$ is $n-1$. The number of cut vertices is the order minus the sum of the above terms $=n-[x^{n-1}]da(G;x,1)$.
\end{proof}

\begin{proposition}
	Let $G_{1}$, $G_{2}$, $\cdots$, $G_{k}$	be pairwise disjoint graphs. Then
	\begin{equation*}
	da(\cup_{i=1}^{k}G_{i};x,y) = \left( \sum_{i=1}^{k}
	\frac{da(G_{i};x,y)}{y^{|G_{i}|}} \right) y^{\sum_{i=1}^{k}|G_{i}|}.
	\end{equation*}
\end{proposition}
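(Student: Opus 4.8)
The plan is to match the defining sums on the two sides term by term. Write $G=\bigcup_{i=1}^{k}G_i$, let $n_i=|V(G_i)|$, and $n=|V(G)|=\sum_{i=1}^{k}n_i$. First I would note that since the $G_i$ are pairwise disjoint, a non-empty set $S\subseteq V(G)$ induces a connected subgraph of $G$ if and only if $S\subseteq V(G_i)$ for some (unique) $i$ and $G_i[S]$ is connected: a set meeting two distinct $G_i$'s induces a disconnected subgraph. Hence the sum defining $da(G;x,y)$ decomposes as $\sum_{i=1}^{k}\sum_{S}$, where the inner sum ranges exactly over the non-empty $S\subseteq V(G_i)$ with $G_i[S]$ connected, i.e.\ over the sets that contribute to $da(G_i;x,y)$.

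Next I would compare the exponents that such an $S\subseteq V(G_i)$ produces in $da(G;x,y)$ with those it produces in $da(G_i;x,y)$. The $x$-exponent is $f_x(S)=|S|$ in both cases. For the $y$-exponent, the key observation is that for every $u\in S$ all neighbours of $u$ in $G$ already lie in $V(G_i)$, so the quantities $\delta_S(u)$ and $\delta_{\bar S}(u)$ are unchanged whether the ambient graph is $G$ or $G_i$ (with $\bar S$ taken relative to $V(G)$ in the first case and relative to $V(G_i)$ in the second). Therefore the only difference between $f_y(S)$ evaluated in $G$ and evaluated in $G_i$ is the additive constant inside the minimum, namely $+n$ versus $+n_i$; so the former exceeds the latter by exactly $n-n_i$, a shift that is the same for every eligible $S\subseteq V(G_i)$.

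Combining the two observations, the contribution of each $S\subseteq V(G_i)$ to $da(G;x,y)$ equals $y^{\,n-n_i}$ times its contribution to $da(G_i;x,y)$. Summing over the eligible $S$ inside $G_i$ and then over $i$ gives
$da(G;x,y)=\sum_{i=1}^{k}y^{\,n-n_i}\,da(G_i;x,y)$,
and substituting $n=\sum_{j=1}^{k}|G_j|$ and $n_i=|G_i|$, then factoring $y^{n}$ out of the sum, yields the stated identity. The step requiring the most care is the $y$-exponent bookkeeping: one must check that the shift $n-n_i$ is uniform over all of $G_i$ (independent of the particular $S$), since that is precisely what makes it legitimate to divide $da(G_i;x,y)$ as a whole by $y^{|G_i|}$; everything else is routine. (Alternatively one could prove the case $k=2$ directly and then induct on $k$, but the direct argument above is no longer.)
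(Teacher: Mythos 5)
Your proof is correct and follows the same route as the paper: decompose the defining sum according to which $G_i$ contains the connected set $S$, and observe that the only change in the $y$-exponent is the uniform additive shift from $n_i$ to $n$ inside $f_y$. The paper's own proof is a terse two-sentence version of exactly this argument; your write-up just makes the bookkeeping explicit.
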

\begin{proof}
	Let $i$ and $j$ be integers in the range $1,2,\cdots, k$. Every connected subgraph in $G_{i}$ is disjoint from subgraphs in $G_{j}$ where $i \neq j$. But the exponent of $y$ in $da(G_{i};x,y)$ is added to $|G_{i}|$, hence the sum of the orders of all the other graphs must be added. 
\end{proof}

\section{Defensive alliance polynomial of special classes of graphs and their characterization by it} \label{Section: Defensive alliance polynomial of special classes of graphs and their characterization by it}
\subsection{The path graph}\label{Subsection: Path}
\begin{proposition}
	A simple graph $G$ is isomorphic to the path $P_{n}$
	if and only if  
	\[
	da(G;x,y) = 2xy^{n-1} + (n-2)xy^{n-2} + y^{n}\sum_{i=2}^{n-1}(n-i+1)x^{i} + x^{n}y^{n+1} \text{, where } n \geq 2.
	\]	
\end{proposition}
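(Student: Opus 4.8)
The plan is to prove both directions by analyzing, for each subset $S \subseteq V(P_n)$ inducing a connected subgraph, the two statistics $f_x(S) = |S|$ and $f_y(S) = \min_{u \in S}\{\delta_S(u) - \delta_{\bar S}(u) + n\}$. The forward direction is a direct computation. The connected induced subgraphs of $P_n$ are exactly the subpaths (consecutive runs of vertices); there are $n-i+1$ of them of order $i$ for each $1 \le i \le n$. So first I would split into cases by $|S| = i$. For $i = 1$, a single vertex $v$ contributes $xy^{n - \deg(v)}$: the two endpoints of $P_n$ have degree $1$ and each interior vertex has degree $2$, giving $2xy^{n-1} + (n-2)xy^{n-2}$, which matches the first two terms. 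For $2 \le i \le n-1$, a subpath of order $i$ that is not all of $P_n$ has its "worst" vertex (the one minimizing $\delta_S(u) - \delta_{\bar S}(u)$) at an endpoint of the subpath: an internal vertex of the subpath has $\delta_S = 2$, $\delta_{\bar S} = 0$, while an endpoint of the subpath has $\delta_S = 1$ and $\delta_{\bar S} = 1$ (since the subpath is proper, each of its endpoints has a neighbor outside $S$), giving $\delta_S(u) - \delta_{\bar S}(u) = 0$ and hence $f_y(S) = n$; there are $n - i + 1$ such subpaths, contributing $(n-i+1)x^i y^n$, matching the middle sum. For $i = n$, the only set is $S = V(P_n)$, whose worst vertex is an endpoint of $P_n$ with $\delta_S = 1$, $\delta_{\bar S} = 0$, so $f_y = n+1$, giving $x^n y^{n+1}$, the last term. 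Assembling these proves the forward direction.

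For the converse, I would assume $da(G;x,y)$ equals the stated polynomial and recover enough structural information to force $G \cong P_n$. Setting $y = 1$ recovers $q(G;x) = da(G;x,1) = 2x + (n-2)x + \sum_{i=2}^{n-1}(n-i+1)x^i + x^n$; in particular by Propositions \ref{Proposition: Order}, \ref{Proposition: Size} and \ref{Proposition: Connected}, $G$ has order $n$, size $[x^2]da(G;x,1) = (n-2) + (n-1) = 2n - \ldots$ — here I must be careful: the $x^1$ coefficient is $2 + (n-2) = n$ (consistent with order $n$), and the $x^2$ coefficient is $n-1$, so $G$ has exactly $n-1$ edges, and $\deg_x(da) = n$ forces $G$ connected. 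A connected graph on $n$ vertices with $n-1$ edges is a tree. Then Proposition \ref{Proposition: Degree sequence} applied to the coefficient of $xy^{n-k}$ gives the degree sequence: $[xy^{n-1}] = 2$ means two vertices of degree $1$, $[xy^{n-2}] = n - 2$ means $n-2$ vertices of degree $2$, and no other $xy^{n-k}$ terms appear, so all remaining vertices (none) have other degrees. A tree with exactly two leaves and all other vertices of degree $2$ is precisely the path $P_n$. That completes the converse.

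The main obstacle is the converse, and specifically making sure the degree-sequence argument is airtight: I need to confirm that the only terms of $da(G;x,y)$ with $x$-exponent $1$ are $2xy^{n-1}$ and $(n-2)xy^{n-2}$, so that by Proposition \ref{Proposition: Degree sequence} the degree sequence is exactly $(1,1,2,2,\ldots,2)$ with two $1$'s and $n-2$ $2$'s — and then invoke the elementary fact that a tree (equivalently, a connected graph with $n-1$ edges) with this degree sequence must be a path. A minor subtlety worth checking is the degenerate small cases: for $n = 2$ the middle sum $\sum_{i=2}^{n-1}$ is empty and the polynomial reads $2xy^1 + y^2 x^2 y^3 = 2xy + x^2y^3$, which should match $P_2 = K_2$ directly, and for $n = 3$ one should verify the formula against $P_3$ by hand. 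I would also double-check the claim that in a proper subpath every endpoint of the subpath genuinely has an outside neighbor — this is where connectedness of the subpath and properness are both used — since that is what pins $f_y(S) = n$ rather than $n+1$ for all proper subpaths of order $\ge 2$.
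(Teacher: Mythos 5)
Your proposal is correct and follows essentially the same route as the paper: the same partition of the connected induced subgraphs by cardinality for the forward direction, and the same use of Propositions \ref{Proposition: Order}, \ref{Proposition: Size}, \ref{Proposition: Connected} and \ref{Proposition: Degree sequence} to conclude that $H$ is a tree with degree sequence $(2,2,\ldots,2,1,1)$, hence a path. The one small overstatement, which you already flagged yourself, is the claim that \emph{each} endpoint of a proper subpath has a neighbour outside $S$ — a subpath containing an endpoint of $P_n$ has only one such end — but that single end still attains the minimum $\delta_{S}(u)-\delta_{\bar{S}}(u)=0$, so $f_{y}(S)=n$ as claimed and the computation stands.
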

\begin{proof}
	First, we show that a graph $G$ which is isomorphic to a path $P_{n}$, has the given defensive alliance polynomial. Let $G$ be of the form in Figure \ref{figure: defensive allaince the path}.
	\begin{figure}[ht!]
		\centering
		
		\begin{tikzpicture}
		\tikzset{vertex/.style = {shape=circle,draw,minimum size=2.5em}}
		
		\node[vertex] (a) at  (0,0) {$v_{1}$};
		\node[vertex] (a1) at (1.5,0) {$v_{2}$};
		\node[vertex] (a2) at (3,0) {$v_{3}$};
		\node[vertex] (c) at  (8,0) {$v_{n-1}$};
		\node[vertex] (c2) at  (10,0) {$v_{n}$};
		
		\draw (a)  to (a1);
		\draw (a1) to (a2);
		
		\path (a2) to node {\dots} (c);
		\node [shape=circle,minimum size=1.5em] (a3) at (4.5,0) {};
		\draw (a2) to (a3);
		
		\node [shape=circle,minimum size=1.5em] (c1) at (6.5,0) {};
		\draw (c) to (c1);
		\draw (c) to (c2);
		\end{tikzpicture}
		\caption{A path graph}
		\label{figure: defensive allaince the path}
	\end{figure}
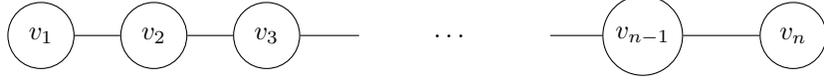
	
	The non-empty subsets of $V(G)$ which induce connected subgraphs in $G$, can be partitioned into the following parts: 
	The part $\{ \{ v_{1} \} , \{ v_{n} \} \}$ in which each set contributes the term $xy^{n-1}$ and by summing, we get the term $2xy^{n-1}$. The part $\{ \{ v_{2} \} , \{ v_{3} \}, \cdots , \{ v_{n-1} \} \}$ in which each set contributes the term $xy^{n-2}$ and by summing, we get the term $(n-2)xy^{n-2}$. The part containing the sets of cardinality $i$ in the range of $2 \leq i \leq n-1$ in which each set contributes the term $x^{i}y^{n}$. By adding the terms we get 
	\begin{align*}
	&(n-1)x^{2}y^{n} + (n-2)x^{3}y^{n} + \cdots + (n-(n-2))x^{n-1}y^{n}\\
	& = y^{n}\sum_{i=2}^{n-1}(n-i+1)x^{i} \text{.}
	\end{align*}
	Finally, the part containing $V(G)$ in which $V(G)$ contributes the term $x^{n}y^{n+1}$.	
	
	Now we prove the converse. Let $n$ be an integer where $n\geq 2$, and $H$ is a graph with the defensive alliance polynomial,
	\begin{equation*}
	da(H;x,y) = 2xy^{n-1} + (n-2)xy^{n-2} + y^{n}\sum_{i=2}^{n-1}(n-i+1)x^{i} + x^{n}y^{n+1} \text{.}
	\end{equation*}	
	By Proposition \ref{Proposition: Order}, the order of $H$ equals $n$. By Proposition \ref{Proposition: Size}, the size of $H$ equals $n-1$. By Proposition \ref{Proposition: Connected}, $H$ is connected. Hence, $H$ is a tree. By Proposition \ref{Proposition: Degree sequence}, the degree sequence of $H$ is $(2,2, \cdots ,2,1,1)$.
	Consequently, $H$ is isomorphic to the path graph $P_{n}$.
\end{proof}

\subsection{The cycle graph}\label{Subsection: Cyclea}
\begin{proposition}
	A simple graph $G$ is isomorphic to the cycle $C_{n}$
	if and only if  
	\[
	da(G;x,y) = nxy^{n-2} + ny^{n}\sum_{i=2}^{n-1}x^{i} + x^{n}y^{n+2} 
	\text{, where } n \geq 3.
	\]
\end{proposition}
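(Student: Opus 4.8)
The plan is to follow the same two-direction template used for the path graph. For the forward direction, I assume $G \cong C_{n}$ with vertices $v_{1}, \dots, v_{n}$ arranged cyclically, and I partition the non-empty vertex subsets that induce connected subgraphs according to their cardinality. A subset of size $1$ is a single vertex $v_{i}$, which has degree $2$ in $G$, so $\delta_{S}(v_{i}) - \delta_{\bar S}(v_{i}) = 0 - 2 = -2$ and it contributes $x y^{n-2}$; there are $n$ such sets, giving $n x y^{n-2}$. For $2 \le i \le n-1$, a connected induced subgraph on $i$ vertices must be a path (a sub-arc of the cycle), and there are exactly $n$ such arcs of each length $i$; in each such path the minimum of $\delta_{S}(u) - \delta_{\bar S}(u)$ is attained at an endpoint, where the value is $1 - 1 = 0$, so each contributes $x^{i} y^{n}$, yielding $n y^{n} \sum_{i=2}^{n-1} x^{i}$. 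Finally $S = V(G)$ induces $C_{n}$ itself; every vertex has $\delta_{S}(v) - \delta_{\bar S}(v) = 2 - 0 = 2$, so it contributes $x^{n} y^{n+2}$. Summing gives the stated polynomial.

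For the converse, suppose $H$ is a simple graph with $da(H;x,y)$ equal to the stated expression. By Proposition \ref{Proposition: Order} the order of $H$ is $n$ (the coefficient of $x$ at $y=1$ is $n$), and by Proposition \ref{Proposition: Size} the size of $H$ is $n$ (the coefficient of $x^{2}$ at $y=1$ is $n$, reading off the $i=2$ term). By Proposition \ref{Proposition: Connected}, since $\deg_{x}(da(H;x,y)) = n$, the graph $H$ is connected. A connected graph on $n$ vertices with exactly $n$ edges is unicyclic. By Proposition \ref{Proposition: Degree sequence}, the coefficient of $x y^{n-k}$ records the number of vertices of degree $k$; here the only $x y^{\ell}$ term is $n x y^{n-2}$, so all $n$ vertices have degree $2$. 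A connected $2$-regular graph on $n$ vertices is exactly $C_{n}$, so $H \cong C_{n}$.

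I expect this to be essentially routine, with the only mildly delicate point being the bookkeeping in the forward direction: one must verify that for $2 \le i \le n-1$ there are precisely $n$ arcs of length $i$ (true because the cycle has $n$ starting positions and $i \le n-1$ keeps the arc a proper path rather than the whole cycle), and that no other connected induced subgraph of intermediate order exists (every proper connected subgraph of a cycle is a path). I should also be careful that the condition $n \ge 3$ is used so that $C_{n}$ is a genuine simple graph and the arcs and the full cycle are genuinely distinct cases; for $n = 3$ the middle sum is just the single term $n y^{n} x^{2}$, which matches $C_{3} = K_{3}$. Compared to the path proof, the converse here is actually cleaner, since the degree sequence $(2,2,\dots,2)$ together with connectedness pins down $C_{n}$ immediately without needing to rule out alternative trees.
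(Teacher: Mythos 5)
Your proof is correct and follows essentially the same route as the paper: the same partition of connected induced subgraphs by cardinality in the forward direction, and the same appeal to the order, connectedness, and degree-sequence propositions in the converse. Your extra observations (explicit $\delta_{S}-\delta_{\bar S}$ computations, the count of $n$ arcs per length, and the redundant unicyclicity remark via the size) only add detail the paper leaves implicit.
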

\begin{proof}
	First, we show that a graph $G$ which is isomorphic to a cycle $C_{n}$, has the given defensive alliance polynomial.
	
	The non-empty subsets of $V(G)$ which induce connected subgraphs in $G$, can be partitioned into the following parts: 
	The part containing the sets of cardinality one in which each set contributes the term $xy^{n-2}$ and by summing, we get the term $nxy^{n-2}$. The part containing the sets of cardinality $i$ in the range of $2 \leq i \leq n-1$ in which each set contributes the term $x^{i}y^{n}$. By adding the terms we get 
	\begin{align*}
	&nx^{2}y^{n} + nx^{3}y^{n} + \cdots + nx^{n-1}y^{n} \\
	& = ny^{n}\sum_{i=2}^{n-1}x^{i} \text{.}
	\end{align*}
	Finally, the part containing $V(G)$ in which $V(G)$ contributes the term $x^{n}y^{n+2}$.	
	
	Now we prove the converse. Let $n$ be an integer where $n\geq 3$, and $H$ is a graph with the defensive alliance polynomial 
	\[
	da(H;x,y) = nxy^{n-2} + ny^{n}\sum_{i=2}^{n-1}x^{i} + x^{n}y^{n+2} \text{.}
	\]
	By Proposition \ref{Proposition: Order}, the order of $H$ equals $n$. By Proposition \ref{Proposition: Connected}, $H$ is connected. By Proposition \ref{Proposition: Degree sequence}, the degree sequence of $H$ is $(2,2, \cdots ,2)$.
	
	Consequently, $H$ is isomorphic to the cycle graph $C_{n}$.
\end{proof}

\subsection{The star graph}\label{Subsection: Star}
\begin{definition}
	Let $n$ be a positive integer. The \index{star graph}\emph{star graph} denoted by $S_{n}$ is defined by the graph join $nK_{1}+K_{1}$. Further the vertex with the maximum degree is called the \emph{center}.
\end{definition}
\begin{proposition}
	A simple graph $G$ is isomorphic to the star $S_{n}$
	if and only if  
	\[
	da(G;x,y) = xy + nxy^{n-1} + \sum_{i=1}^{ \lfloor \frac{n}{2} \rfloor } \binom{n}{i}x^{i+1}y^{2i} + \sum_{i= \lceil \frac{n+1}{2} \rceil }^{n} \binom{n}{i}x^{i+1}y^{n+1}
	\text{, where }n \geq 1 \text{.}
	\]
\end{proposition}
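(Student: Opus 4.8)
The plan is to prove the two directions separately, in the same spirit as the preceding propositions for $P_n$ and $C_n$.

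\emph{Forward direction.} Assume $G \cong S_n$, with center $c$ and leaves $\ell_1,\dots,\ell_n$, so the order of $G$ is $n+1$. The structural fact that organizes the whole computation is that every leaf is adjacent only to $c$: hence a non-empty $S \subseteq V(G)$ induces a connected subgraph precisely when either $|S|=1$, or $c \in S$ (and in the latter case $S \setminus \{c\}$ may be any subset of the leaves). So the connected induced subgraphs fall into exactly two families: (i) the $n+1$ singletons, namely $\{c\}$ and the $n$ sets $\{\ell_j\}$; and (ii) for each $i$ with $1 \le i \le n$, the $\binom{n}{i}$ sets $\{c\} \cup T$ with $T$ an $i$-element set of leaves. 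I would begin by recording this decomposition and the count $\binom{n}{i}$ for family (ii).

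Next I would evaluate $f_x$ and $f_y$ on each family and collect like terms. Here $f_x$ is just the cardinality, contributing the powers $x^{1}$ for family (i) and $x^{i+1}$ for family (ii). For $f_y$ there are only three short sub-calculations from Definition~\ref{Definition: defensive alliance polynomial}: the value on $\{c\}$; the value on $\{\ell_j\}$, which is the same for every $j$ (giving the coefficient $n$); and the value on $\{c\} \cup T$ with $|T| = i$, which is the minimum of the quantity attached to the center and the common quantity attached to each leaf of $T$. One of these two competing quantities varies with $i$ while the other does not, so the minimum is realized by the center for small $i$ and by the leaves for large $i$; the crossover occurs near $i = n/2$, and this is exactly what forces the statement's two separate sums $\sum_{i=1}^{\lfloor n/2\rfloor}$ and $\sum_{i=\lceil (n+1)/2\rceil}^{n}$. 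Grouping the singleton terms by their $f_y$-value and the family-(ii) terms by $i$ with multiplicity $\binom{n}{i}$ then assembles the claimed closed form. I would also check the small cases $n = 1, 2$ by hand, since there one or both sums are empty or meet at their endpoints.

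\emph{Converse.} Let $H$ satisfy the stated identity. By Proposition~\ref{Proposition: Order} the order of $H$ is $n+1$, and by Proposition~\ref{Proposition: Size} its size is $n$. Extracting the coefficient of $x^{1}$ and applying Proposition~\ref{Proposition: Degree sequence}, $H$ has a unique vertex of (maximum) degree $n$; this vertex is adjacent to all $n$ others, so $H$ is connected (alternatively invoke Proposition~\ref{Proposition: Connected}), and since the degrees sum to $2n$ the remaining $n$ vertices each have degree $1$. Therefore $H$ has no edges beyond those at the center, i.e.\ $H \cong K_{1,n} = S_n$.

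\emph{Expected obstacle.} The only delicate step is the bookkeeping in the forward direction: computing $f_y(\{c\} \cup T)$ as the minimum of two competing expressions, pinning down the crossover index precisely in terms of $\lfloor n/2\rfloor$ and $\lceil (n+1)/2\rceil$ (with attention to the parity of $n$ at the value where the two expressions coincide, so that the boundary term is counted once and with the right exponent of $y$), and then matching the two resulting binomial sums term by term with the statement. Everything else is routine substitution into Definition~\ref{Definition: defensive alliance polynomial} or a direct appeal to the already-established propositions of Section~\ref{Section: Properties}.
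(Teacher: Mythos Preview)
Your proposal is correct and follows essentially the same approach as the paper: in the forward direction you use the same decomposition into singletons and ``center plus $i$ leaves'' sets, compute $f_y$ as a minimum of the center's and the leaves' contributions, and split the sum at the crossover near $i=n/2$; in the converse you read off order, size, and degree sequence and reconstruct $S_n$. The only cosmetic difference is that the paper deduces connectedness via Proposition~\ref{Proposition: Connected} and then argues ``tree with degree sequence $(n,1,\dots,1)$'', whereas you deduce connectedness directly from the existence of a degree-$n$ vertex in an $(n{+}1)$-vertex graph; both are fine and equally short.
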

\begin{proof}
	First, we show that a graph $G$ which is isomorphic to a star $S_{n}$, has the given defensive alliance polynomial. Let $G$ be of the form in Figure \ref{figure: defensive allaince the star}.
	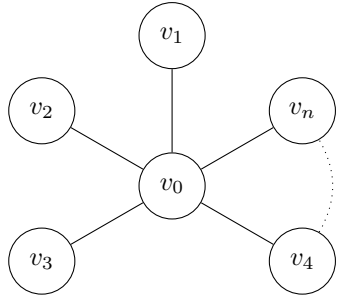
\begin{figure}[ht!]
		\centering
		\begin{tikzpicture}[baseline=(current bounding box.north)]
		\tikzset{vertex/.style = {shape=circle,draw,minimum size=2.5em}}
		
		\node[vertex] (v0) at  (0,0) {$v_{0}$};
		\node[vertex] (v1) at (90:2) {$v_{1}$};
		\node[vertex] (v2) at (150:2) {$v_{2}$};
		\node[vertex] (v3) at  (210:2) {$v_{3}$};
		\node[vertex] (v4) at  (330:2) {$v_{4}$};
		\node[vertex] (vn) at  (30:2) {$v_{n}$};

		\draw [dotted,bend right] (v4) to (vn);
		
		\draw (v0) to (v1);
		\draw (v0) to (v2);
		\draw (v0) to (v3);
		\draw (v0) to (v4);
		\draw (v0) to (vn);
		
		\end{tikzpicture}
		\caption{A star graph}
		\label{figure: defensive allaince the star}
	\end{figure}
	
	The non-empty subsets of $V(G)$ which induce connected subgraphs in $G$, can be partitioned into the following parts: 
	The part $\{ \{ v_{0} \} \}$ in which $\{ v_{0} \}$ contributes the term $xy$. The part $\{ \{ v_{1} \},\{ v_{2} \},\cdots , \{ v_{n} \} \}$  in which each set contributes the term $xy^{n-1}$ and by summing, we get the term $nxy^{n-1}$. The part containing the sets of cardinality $i$ in the range of $2 \leq i \leq \lfloor \frac{n}{2} \rfloor $ in which each set contributes the term $x^{i+1}y^{2i+1}$ and by summing, we get $\sum_{i=1}^{ \lfloor \frac{n}{2} \rfloor } \binom{n}{i} x^{i+1}y^{2i+1}$. The part containing the sets of cardinality $i$ in the range of $ \lceil  \frac{n+1}{2} \rceil \leq i \leq n$ in which each set contributes the term $x^{i+1}y^{n+1}$ and by summing, we get $\sum_{ \lceil \frac{n+1}{2} \rceil }^{i=n} \binom{n}{i} x^{i+1}y^{n+1}$.
	
	Now we prove the converse. Let $n$ be an integer where $n\geq 1$, and $H$ is a graph with the defensive alliance polynomial 
	\[
	da(H;x,y) = xy + nxy^{n-1} + \sum_{i=1}^{ \lfloor \frac{n}{2} \rfloor } \binom{n}{i}x^{i+1}y^{2i} + \sum_{i= \lceil \frac{n+1}{2} \rceil }^{n} \binom{n}{i}x^{i+1}y^{n+1} \text{.}
	\]
	By Proposition \ref{Proposition: Order}, the order of $H$ equals $n+1$. By Proposition \ref{Proposition: Size}, the size of $H$ equals $n$. By Proposition \ref{Proposition: Connected}, $H$ is connected. Hence, $H$ is a tree. By Proposition \ref{Proposition: Degree sequence}, the degree sequence of $H$ is $(n,1,1 \cdots ,1)$.
	Consequently, $H$ is isomorphic to the star graph $S_{n}$.
\end{proof}

\subsection{The complete graph}\label{Subsection: Complete}
\begin{proposition}
	A simple graph $G$ is isomorphic to the complete graph $K_{n}$
	if and only if  
	\[
	da(G;x,y) = \frac{(1+xy^{2})^{n}-1}{y} 
	\text{, where }n \geq 1 \text{ and } y \neq 0 \text{.}
	\]
\end{proposition}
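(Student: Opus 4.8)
The plan is to prove the two implications separately: the forward direction is a direct enumeration of the connected induced subgraphs of $K_n$, and the converse is an immediate consequence of Propositions~\ref{Proposition: Order} and~\ref{Proposition: Size}.

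For the forward direction I would start from the observation that in $K_n$ every non-empty vertex subset $S$ induces a complete, hence connected, subgraph, so every non-empty $S$ contributes a term. If $|S| = i$, then for each $u \in S$ we have $\delta_{S}(u) = i - 1$ and $\delta_{\bar{S}}(u) = n - i$, so $\delta_{S}(u) - \delta_{\bar{S}}(u) + n = 2i - 1$ uniformly in $u$; therefore $f_{x}(S) = i$ and $f_{y}(S) = 2i - 1$. Grouping the $\binom{n}{i}$ subsets of each size $i$ and applying the binomial theorem gives
\[
da(K_{n};x,y) = \sum_{i=1}^{n}\binom{n}{i}x^{i}y^{2i-1} = \frac{1}{y}\left((1+xy^{2})^{n}-1\right),
\]
which is the claimed formula (and in particular an honest polynomial, since every term of $(1+xy^{2})^{n}-1$ is divisible by $y$).

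For the converse, let $H$ be a graph with $da(H;x,y) = \bigl((1+xy^{2})^{n}-1\bigr)/y = \sum_{i=1}^{n}\binom{n}{i}x^{i}y^{2i-1}$, so that $da(H;x,1) = (1+x)^{n}-1$. By Proposition~\ref{Proposition: Order} the order of $H$ is the coefficient of $x^{1}$, namely $n$, and by Proposition~\ref{Proposition: Size} the size of $H$ is the coefficient of $x^{2}$, namely $\binom{n}{2}$. A simple graph on $n$ vertices with $\binom{n}{2}$ edges has every pair of vertices joined by an edge, hence $H \cong K_{n}$. Equivalently, one could invoke Proposition~\ref{Proposition: number of connected induced subgraphs} with $k = 2$: all $\binom{n}{2}$ two-element subsets induce connected subgraphs, i.e.\ are edges, which forces $H = K_{n}$.

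I do not anticipate a real obstacle here. The only step needing care is checking that the $y$-exponent of a size-$i$ subset is exactly $2i - 1$, which rests on the fact that every induced subgraph of $K_n$ is regular, so the minimum defining $f_{y}$ is attained at every vertex; after that, recognizing $\sum_{i}\binom{n}{i}x^{i}y^{2i-1}$ as the stated closed form via the binomial theorem, and reading off the order and size in the converse, are routine.
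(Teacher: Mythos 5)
Your proof is correct and follows essentially the same route as the paper: a direct enumeration showing every $i$-subset of $K_n$ contributes $x^i y^{2i-1}$, followed by the binomial theorem, and a converse that reads off basic invariants via the Section~\ref{Section: Properties} propositions. The only (immaterial) difference is in the converse, where you deduce $H \cong K_n$ from the order $n$ and size $\binom{n}{2}$ (Propositions~\ref{Proposition: Order} and~\ref{Proposition: Size}), while the paper instead reads off the constant degree sequence $(n-1,\dots,n-1)$ from Proposition~\ref{Proposition: Degree sequence}; both are equally valid one-line arguments.
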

\begin{proof}
	First, we show that a graph $G$ which is isomorphic to a complete graph $K_{n}$, has the given defensive alliance polynomial.
	
	The non-empty subsets of $V(G)$ which induce connected subgraphs in $G$, can be partitioned into one part: 
	The part containing the sets of cardinality $i$ in the range of $1 \leq i \leq n$ in which each set contributes the term $x^{i}y^{2i -1}$ and by summing, we get:
	\begin{align*}
	&\binom{n}{1}x^{1}y^{1} +\binom{n}{2}x^{2}y^{3} + \cdots + \binom{n}{n}x^{n}y^{2n-1} \\
	&=\sum_{i=1}^{n} \binom{n}{i}x^{i}y^{2i-1}  \\
	&=\frac{1}{y} ( \sum_{i=0}^{n} \binom{n}{i}(xy^{2})^{i} - 1 ) \\
	&= \frac{(1+xy^{2})^{n}-1}{y} \text{.} 
	\end{align*}
	
	Now we prove the converse. Let $n$ be an integer where $n\geq 1$ and $H$ is a graph with the defensive alliance polynomial,
	$da(H;x,y) = \frac{(1+xy^{2})^{n}-1}{y}$.
	By Proposition \ref{Proposition: Order}, the order of $H$ equals $n$. By Proposition \ref{Proposition: Degree sequence}, the degree sequence of $H$ is $(n-1,n-1, \cdots ,n-1)$.
	Consequently, $H$ is isomorphic to the complete graph $K_{n}$.
\end{proof}

\subsection{The regular graph}\label{Subsection: Regular characterization proofs and formula}
\begin{proposition}
	A simple graph $G$ is isomorphic to a $\Delta$-regular graph
	if and only if $[x]da(G;x,y)=ny^{n-\Delta}$.
\end{proposition}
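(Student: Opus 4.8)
The plan is to first pin down exactly which vertex subsets contribute to the coefficient $[x]da(G;x,y)$. Since $f_{x}(S)=|S|$, the only sets $S$ producing a factor $x^{1}$ are the singletons $\{v\}$ with $v\in V(G)$, and each such set trivially induces a connected subgraph. For $S=\{v\}$ we have $\delta_{S}(v)=0$ and $\delta_{\bar{S}}(v)=\deg(v)$, so $f_{y}(\{v\})=\delta_{S}(v)-\delta_{\bar{S}}(v)+n=n-\deg(v)$. Hence $[x]da(G;x,y)=\sum_{v\in V(G)}y^{\,n-\deg(v)}$, a polynomial in $y$ that is a sum of exactly $n$ (not necessarily distinct) monomials, all with nonnegative exponents lying in $\{1,\dots,n\}$.

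For the forward direction, if $G$ is $\Delta$-regular then $\deg(v)=\Delta$ for every vertex $v$, so the sum above collapses to $n\,y^{\,n-\Delta}$, which is the claimed expression. For the converse, suppose $[x]da(G;x,y)=n\,y^{\,n-\Delta}$. Setting $y=1$ yields $[x^{1}]da(G;x,1)=n$, so by Proposition~\ref{Proposition: Order} the order of $G$ is $n$; this is the step that reconciles the integer $n$ appearing on the right-hand side (both as coefficient and inside the exponent) with the actual order of $G$, which we must establish before comparing exponents. Now the identity $\sum_{v\in V(G)}y^{\,n-\deg(v)}=n\,y^{\,n-\Delta}$ holds between genuine polynomials in $y$: the left-hand side is a sum of $n$ monomials with nonnegative exponents, and the only way such a sum can equal the single monomial $n\,y^{\,n-\Delta}$ is that every one of those exponents equals $n-\Delta$, i.e. $\deg(v)=\Delta$ for all $v\in V(G)$. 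Therefore $G$ is $\Delta$-regular.

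I do not expect a serious obstacle here. The one point that genuinely needs care is the converse: one should not silently assume that the ``$n$'' in $n\,y^{\,n-\Delta}$ is the order of $G$, but rather extract the order from the polynomial via Proposition~\ref{Proposition: Order} (equivalently, note that evaluating $[x]da(G;x,y)$ at $y=1$ counts the vertices), after which the whole argument reduces to matching monomials term by term.
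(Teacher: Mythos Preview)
Your proof is correct and follows essentially the same approach as the paper: both directions hinge on the observation that the singleton $\{v\}$ contributes $xy^{\,n-\deg(v)}$, so $[x]da(G;x,y)$ encodes the degree sequence. The only cosmetic difference is that the paper invokes its Proposition~\ref{Proposition: Degree sequence} to read off the degree sequence from $[x]da(G;x,y)$, whereas you re-derive that fact inline and argue directly that a sum of $n$ monomials equaling $ny^{\,n-\Delta}$ forces all degrees to be $\Delta$; your extra care in first extracting the order via Proposition~\ref{Proposition: Order} is a nice touch that the paper leaves implicit.
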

\begin{proof}
	First, we show that a graph $G$ which is isomorphic to a $\Delta$-regular graph has $[x]da(G;x,y)=ny^{n-\Delta}$.
	Every subset of $V(G)$ which induces a connected subgraph in $G$, contributes a term $xy^{n-\Delta}$ and by summing, we get the term $nxy^{n-\Delta}$.
	
	Now we prove the converse. Let $H$ be a graph with $[x]da(G;x,y)=ny^{n-\Delta}$.
	By Proposition \ref{Proposition: Order}, the order of $H$ equals $n$. By Proposition \ref{Proposition: Degree sequence}, the degree sequence of $H$ is $(\Delta,\Delta, \cdots ,\Delta)$.
	Consequently, $H$ is isomorphic to a $\Delta$-regular graph.
\end{proof}

\begin{lemma}\label{lemma:regularGraph_setComponenets}
	Let $G$ be a $\Delta$-regular graph. A subset of $V(G)$ of cardinality $k$ induces a component in $G$ if and only if it contributes in $da(G;x,y)$ a term $x^{k}y^{\Delta+n}$.
\end{lemma}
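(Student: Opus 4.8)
The plan is to unwind the two notions in play --- ``$S$ induces a component of $G$'' and ``$S$ contributes in $da(G;x,y)$ the term $x^{k}y^{\Delta+n}$'' --- and to exploit the fact that, in a $\Delta$-regular graph, the quantity $\delta_{S}(u)-\delta_{\bar{S}}(u)$ governing $f_{y}$ is completely determined by how many edges leave $S$ at the vertex $u$. Recall that a set contributing a term to $da(G;x,y)$ is by definition nonempty and induces a connected subgraph, so connectedness of $G[S]$ is built into the right-hand side from the start.

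For the forward direction I would take $S\subseteq V(G)$ with $|S|=k$ that induces a component of $G$. Then $G[S]$ is connected, so $S$ does contribute a term; moreover no vertex of $S$ has a neighbour in $\bar{S}$, so $\delta_{\bar{S}}(u)=0$ and $\delta_{S}(u)=\deg_{G}(u)=\Delta$ for every $u\in S$. Substituting into the definition of $f_{y}$ gives $f_{y}(S)=\min_{u\in S}\{\Delta-0+n\}=\Delta+n$, while $f_{x}(S)=|S|=k$, so the term contributed by $S$ is exactly $x^{k}y^{\Delta+n}$.

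For the converse I would start from a set $S$ that contributes $x^{k}y^{\Delta+n}$, so that $G[S]$ is connected, $|S|=f_{x}(S)=k$, and $\min_{u\in S}\{\delta_{S}(u)-\delta_{\bar{S}}(u)+n\}=\Delta+n$. The key observation is that $\Delta$-regularity forces $\delta_{S}(u)+\delta_{\bar{S}}(u)=\Delta$ for each $u\in S$, hence $\delta_{S}(u)-\delta_{\bar{S}}(u)=\Delta-2\delta_{\bar{S}}(u)\le\Delta$, with equality precisely when $\delta_{\bar{S}}(u)=0$. Therefore the minimum over $u\in S$ equals $\Delta$ if and only if $\delta_{\bar{S}}(u)=0$ for every $u\in S$, i.e.\ if and only if no edge joins $S$ to $\bar{S}$; combined with connectedness of $G[S]$, this is exactly the statement that $S$ induces a component of $G$.

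There is essentially no real obstacle: the only point requiring care is to read the lemma as a statement about the single set $S$ (which fixes both $f_{x}(S)$ and $f_{y}(S)$), rather than about the coefficient of $x^{k}y^{\Delta+n}$ in $da(G;x,y)$, which may collect contributions from several sets. The regularity hypothesis enters only through the identity $\delta_{S}(u)+\delta_{\bar{S}}(u)=\Delta$, and it is precisely what makes $\delta_{S}(u)-\delta_{\bar{S}}(u)$ attain its maximum value $\Delta$ exactly at the component configuration.
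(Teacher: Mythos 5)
Your proposal is correct and follows essentially the same route as the paper: both reduce the claim to the identity $\delta_{S}(u)-\delta_{\bar{S}}(u)=\Delta-2\delta_{\bar{S}}(u)$ forced by $\Delta$-regularity, so that $f_{y}(S)=n+\Delta-2t$ where $t=\max_{u\in S}\delta_{\bar{S}}(u)$, and conclude $f_{y}(S)=\Delta+n$ exactly when $t=0$. The only cosmetic difference is that you argue the converse directly via the equality case of $\le\Delta$ while the paper phrases it as a contradiction; your version also states the easy forward direction more explicitly than the paper does.
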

\begin{proof}
	Every component of order $k$ in a $\Delta$-regular graph, contributes a term with $x^{k}y^{\Delta+n}$.
	
	To prove the converse, let $S$ be a subset of $V(G)$ of cardinality $k$ which contributes in $da(G;x,y)$ a term $x^{k}y^{\Delta+n}$. For sake of contradiction, assume that $S$ is not a component. Hence, there is a vertex in $S$ which is connected to other vertices outside $S$. Let the maximum number of vertices connected to a vertex in $S$ from outside of $S$ to be $t$. Hence $S$ contributes in $da(G;x,y)$ a term $x^{k}y^{n + (\Delta-t)-t}=x^{k}y^{n + \Delta-2t}$, contradiction since $t \neq 0$. Consequently, $t = 0$ and $S$ contributes a component in $G$.  
\end{proof}

\begin{lemma}\label{lemma:regularGraph_componenets}
	For a $\Delta$-regular graph $G$, the number of components with cardinality $k$ is $=[x^{k}y^{\Delta+n}]da(G;x,y)$.
\end{lemma}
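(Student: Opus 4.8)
The plan is to obtain the statement as an immediate corollary of Lemma \ref{lemma:regularGraph_setComponenets}. First I would recall, straight from Definition \ref{Definition: defensive alliance polynomial}, that for any exponents $k$ and $m$ the coefficient $[x^{k}y^{m}]da(G;x,y)$ counts exactly the non-empty subsets $S \subseteq V(G)$ such that $G[S]$ is connected, $f_{x}(S) = k$, and $f_{y}(S) = m$; in our case $m = \Delta + n$, so $[x^{k}y^{\Delta+n}]da(G;x,y)$ is the number of subsets $S$ of cardinality $k$ inducing a connected subgraph and contributing the term $x^{k}y^{\Delta+n}$.

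Second, I would apply Lemma \ref{lemma:regularGraph_setComponenets}: in a $\Delta$-regular graph, a cardinality-$k$ subset contributes $x^{k}y^{\Delta+n}$ if and only if it induces a component of $G$. This identifies the set counted by the coefficient with the set of cardinality-$k$ vertex subsets that induce components of $G$. Third, I would note the (trivial) bookkeeping that the assignment sending a component of $G$ of order $k$ to its vertex set is a bijection onto this collection of subsets — a component is determined by, and equal to the subgraph induced by, its vertex set, and conversely any cardinality-$k$ subset inducing a component is the vertex set of a component of order $k$. Combining the three observations gives that $[x^{k}y^{\Delta+n}]da(G;x,y)$ equals the number of components of $G$ with cardinality $k$.

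There is essentially no obstacle here, since all the real content is already contained in Lemma \ref{lemma:regularGraph_setComponenets}; the only point that needs a word of care is checking that this correspondence neither overcounts (two components never share a vertex set) nor undercounts (every contributing subset genuinely arises from a component), but both directions are exactly what the previous lemma and the definition of "component" provide.
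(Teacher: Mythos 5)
Your proposal is correct and follows essentially the same route as the paper: both deduce the statement directly from Lemma \ref{lemma:regularGraph_setComponenets} by identifying the coefficient $[x^{k}y^{\Delta+n}]da(G;x,y)$ with the count of cardinality-$k$ subsets inducing components. Your extra remark about the bijection between components and their vertex sets is harmless bookkeeping that the paper leaves implicit.
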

\begin{proof}
	From Lemma \ref{lemma:regularGraph_setComponenets}, every subset of $V(G)$ with cardinality $k$, induces a component in $G$ if and only if this subset contributes in $da(G;x,y)$ a term $x^{k}y^{\Delta+n}$. By summing the terms, the result follows. 
\end{proof}
\begin{corollary}
	Let $G$ be a connected $\Delta$-regular graph. $[x^{n}]da(G;x,y)=y^{\Delta+n}$.
\end{corollary}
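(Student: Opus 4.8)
The plan is to read off the coefficient of $x^{n}$ directly from the definition of $da(G;x,y)$ and to use the fact that $V(G)$ is the only vertex subset of cardinality $n$. First I would observe that a term $x^{k}y^{\ell}$ with $k=n$ can only be contributed by a subset $S \subseteq V(G)$ with $|S|=n$, and the unique such subset is $S=V(G)$ itself. Since $G$ is connected, $G[V(G)]=G$ is connected, so $V(G)$ does contribute exactly one term, and $[x^{n}]da(G;x,y)$ is therefore a single monomial $y^{f_{y}(V(G))}$.

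Next I would compute $f_{y}(V(G))$. For every vertex $u\in V(G)$ we have $\delta_{V(G)}(u)=\deg(u)=\Delta$ and $\delta_{\overline{V(G)}}(u)=\delta_{\emptyset}(u)=0$, so $\delta_{V(G)}(u)-\delta_{\overline{V(G)}}(u)+n=\Delta+n$ for all $u$. Hence $f_{y}(V(G))=\min_{u\in V(G)}\{\Delta+n\}=\Delta+n$, and the monomial contributed by $V(G)$ is $x^{n}y^{\Delta+n}$. This yields $[x^{n}]da(G;x,y)=y^{\Delta+n}$.

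Alternatively, and perhaps more in the spirit of the surrounding results, I would invoke Lemma \ref{lemma:regularGraph_componenets}: the number of components of cardinality $k$ in the $\Delta$-regular graph $G$ equals $[x^{k}y^{\Delta+n}]da(G;x,y)$. Since $G$ is connected, $V(G)$ is its unique component and has cardinality $n$, so $[x^{n}y^{\Delta+n}]da(G;x,y)=1$; combined with the observation that $x^{n}$ occurs with no other power of $y$, this again gives $[x^{n}]da(G;x,y)=y^{\Delta+n}$.

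There is essentially no obstacle here: the only point that needs a moment's care is the uniqueness claim, namely that no monomial $x^{n}y^{\ell}$ with $\ell\neq\Delta+n$ appears, which is immediate because $V(G)$ is the only subset of $V(G)$ of size $n$ and it contributes exactly one monomial. The corollary is thus a direct specialization of the preceding lemmas.
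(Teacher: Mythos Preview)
Your proposal is correct. The paper's own proof consists of the single sentence ``From Lemma \ref{lemma:regularGraph_componenets}, the result follows,'' which is precisely your second, alternative argument; your first argument (computing $f_y(V(G))$ directly from the definition) is also fine and in fact makes explicit the uniqueness point that the paper leaves implicit.
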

\begin{proof}
	From Lemma \ref{lemma:regularGraph_componenets}, the result follows.
\end{proof}

\subsection{The double star graph}\label{Subsection: Double star}
\begin{definition}
	Let $r$ and $t$ be positive integers. The \index{double star graph}\emph{star graph} denoted by $S_{r,t}$ is defined by the graph union $S_{r} \cup S_{t}$ and connecting the two centers of the two stars.
\end{definition}
\begin{proposition}
	A simple graph $G$ is isomorphic to the double star $S_{r,t}$
	if and only if 
	\begin{align*}
		[x]da(G;x,y) =& (r+t)y^{r+t+1}+y^{r+1}+y^{t+1} \text{ and}\\
		[x^{r+t+2}]da(G;x,y) =& y^{r+t+3} \text{, where $r$ and $t$ are positive integers.}
	\end{align*}
\end{proposition}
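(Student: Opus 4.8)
The plan is to prove the two directions separately, as with the other classes in this section, relying only on the structural propositions already established (order, degree sequence, connectivity).

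For the ``only if'' direction I would take $G \cong S_{r,t}$ and read off the two coefficients straight from the definition of $da$. Only the singletons contribute to $[x]da(G;x,y)$, and $\{v\}$ contributes $xy^{\,n-\deg(v)}$ with $n=r+t+2$; since $S_{r,t}$ has $r+t$ leaves (degree $1$), one center of degree $r+1$ and one center of degree $t+1$, collecting the terms gives $(r+t)y^{r+t+1}+y^{r+1}+y^{t+1}$. For $[x^{r+t+2}]da(G;x,y)$, the only subset of size $n=r+t+2$ is $V(G)$, which is connected because $S_{r,t}$ is, and $f_{y}(V(G)) = n+\min_{u}\delta_{V(G)}(u) = n+1 = r+t+3$, so the single term $y^{r+t+3}$ occurs.

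For the converse, suppose $H$ satisfies both identities. First, Proposition~\ref{Proposition: Order} applied to $[x]da(H;x,y)$ at $y=1$ yields $n=(r+t)+1+1=r+t+2$. Then Proposition~\ref{Proposition: Degree sequence} turns the monomials of $[x]da(H;x,y)$ into the degree multiset: the exponent $r+t+1=n-1$ gives degree $1$ with multiplicity $r+t$, and the exponents $r+1,t+1$ give degrees $t+1,r+1$ with multiplicity one each when $r\ne t$, or a single degree $r+1=t+1$ with multiplicity two when $r=t$ (here one notes $r+t+1\ne r+1$ and $r+t+1\ne t+1$ since $r,t\ge 1$). Summing degrees gives $|E(H)|=r+t+1=n-1$. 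Since $[x^{r+t+2}]da(H;x,y)=y^{r+t+3}\ne 0$ we get $\deg_{x}(da(H;x,y))\ge n$, hence $=n$, so $H$ is connected by Proposition~\ref{Proposition: Connected}; a connected graph on $n$ vertices with $n-1$ edges is a tree.

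Lastly I would argue that a tree $H$ with exactly two vertices $u,v$ of degree at least two, of degrees $r+1$ and $t+1$, is $S_{r,t}$: the unique $u$--$v$ path in a tree has all of its internal vertices of degree at least two, hence it has length one and $u\sim v$; and since $n=r+t+2\ge 4$, every leaf is adjacent to a vertex of degree at least two, i.e.\ to $u$ or to $v$, so the $r+t$ leaves split into $r$ neighbours of $u$ and $t$ neighbours of $v$, which is exactly the structure of $S_{r,t}$. I expect the only mildly delicate points to be the exponent bookkeeping in the degree-sequence step in the coincident case $r=t$, and this last short observation that a tree with exactly two non-leaf vertices is a double star; everything else is a direct transcription of the cited propositions.
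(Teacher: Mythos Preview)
Your proof is correct and follows essentially the same line as the paper: read off order, degree sequence, and connectedness from the two given coefficients, then argue that the two non-leaf vertices must be adjacent and that the leaves split between them. The only cosmetic difference is that you make the tree conclusion explicit and finish with a path argument, whereas the paper argues directly from connectedness that the vertex of degree $r+1$ must be adjacent to the vertex of degree $t+1$.
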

\begin{proof}
	First, we show that a graph $G$ which is isomorphic to a double star $S_{r,t}$, has the above properties in the proposition. Let $G$ be of the form in Figure \ref{figure: defensive allaince the double star}.
	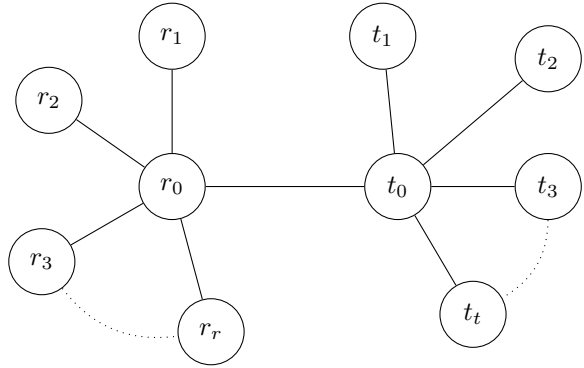
\begin{figure}[ht!]
		\centering
		\begin{tikzpicture}[baseline=(current bounding box.north)]
		\tikzset{vertex/.style = {shape=circle,draw,minimum size=2.5em}}
		
		\node[vertex] (r0) at  (0,0) {$r_{0}$};
		\node[vertex] (r1) at (0,2) {$r_{1}$};
		\node[vertex] (r2) at (145:2) {$r_{2}$};
		\node[vertex] (r3) at  (210:2) {$r_{3}$};
		\node[vertex] (r4) at  (285:2) {$r_{r}$};
		
		\node[vertex] (t0) at  (3,0) {$t_{0}$};
		\node[vertex] (t1) at (2.8,2) {$t_{1}$};
		\node[vertex] (t2) at (5,1.7) {$t_{2}$};
		\node[vertex] (t3) at  (5,0) {$t_{3}$};
		\node[vertex] (t4) at  (4,-1.7) {$t_{t}$};

		\draw (r0)  to (r1)
		(r0) to (r2)
		(r0) to (r3)
		(r0) to (r4)
		;
		\draw (t0)  to (t1)
		(t0) to (t2)
		(t0) to (t3)
		(t0) to (t4)
		;	
		\draw (t0)  to (r0);
		
		\draw [dotted,bend right] (r3) to (r4);
		\draw [dotted,bend left] (t3) to (t4);
		\end{tikzpicture}
		\caption{A double star graph}
		\label{figure: defensive allaince the double star}
	\end{figure}
	
	The subsets of $V(G)$ with cardinality one which induce connected subgraphs in $G$, can be partitioned into the following parts: The part $\{ \{r_{0} \} \}$ in which $\{ r_{0} \}$ contributes the term $xy^{t+1}$. The part $\{ \{t_{0} \} \}$ in which $\{ t_{0} \}$ contributes the term $xy^{r+1}$. The part $\{ \{r_{1}\},\{r_{2}\}, \cdots ,\{r_{r}\},  \{t_{1}\},\{t_{2}\}, \cdots ,\{t_{t}\}\}$ in which each set contributes the term $xy^{r+t+1}$ and by summing, we get the term $(r+t)xy^{r+t+1}$.
	
	The set $V(G)$ contributes the term $x^{r+t+2}y^{r+t+3}$.
	
	Now we prove the converse. Let $r$ and $t$ be integers and $H$ is a graph with 	
	\begin{align*}
		[x]da(G;x,y) =& (r+t)y^{r+t+1}+y^{r+1}+y^{t+1} \text{ and}\\
		[x^{r+t+2}]da(G;x,y) =& y^{r+t+3} \text{.}
	\end{align*}	
	
	By Proposition \ref{Proposition: Order}, the order of $H$ equals $r+t+2$. By Proposition \ref{Proposition: Connected}, $H$ is connected. By Proposition \ref{Proposition: Degree sequence}, the degree sequence of $H$ is $(r+1,s+1,1,1, \cdots ,1)$. Let the vertex with degree $r+1$ be $r_{0}$ and the vertex with degree $t+1$ be $t_{0}$. Connect $r_{0}$ with $r+1$ vertices. If all those vertices connected to $r_{0}$ are with degree one, then the graph will be disconnected which is contradiction. Then $r_{0}$ is connected to $t_{0}$. By connecting the rest of the vertcies to $t_{0}$, $H$ is reconstructed. Consequently, $H$ is isomorphic to the double star graph $S_{r,t}$.
\end{proof}

\bigskip

\subsection{The complete bipartite graph}\label{Subsection: Complete bipartite}
\begin{lemma}\label{lemma: bipartiteRegular1}
	Let $G$ be a simple graph. Let $k_{3}$ be the number of the subsets which induce connected subgraphs in $G$ with order three. Let the number of connected subgraphs in $G$ with order three and size two be $S_{3,2}$ and with order three and size three be $S_{3,3}$, then
	\[
	k_{3} = S_{3,2} - 2S_{3,3}\text{.}
	\]
\end{lemma}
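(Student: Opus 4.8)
The plan is to argue by a direct case analysis on the $3$-element subsets of $V(G)$. A subset $T\subseteq V(G)$ with $|T|=3$ induces a subgraph $G[T]$ having $0$, $1$, $2$, or $3$ edges, and $G[T]$ is connected precisely when it has $2$ edges (so $G[T]\cong P_{3}$) or $3$ edges (so $G[T]\cong K_{3}$). Write $p$ for the number of $3$-subsets $T$ with $G[T]\cong P_{3}$ and $t$ for the number with $G[T]\cong K_{3}$. Since the $3$-subsets inducing a connected subgraph are exactly those two types, the first step records $k_{3}=p+t$.

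Next I would count $S_{3,3}$ and $S_{3,2}$ by grouping their contributions according to which vertex triple $T$ they span. A subgraph of order three and size three uses all three edges of $G[T]$, hence can occur only when $G[T]\cong K_{3}$, and then exactly one such subgraph arises; so $S_{3,3}=t$. A subgraph of order three and size two is a copy of $P_{3}$ spanning some $T$: if $G[T]\cong P_{3}$ there is exactly one choice of its two edges, if $G[T]\cong K_{3}$ there are three choices (delete one of the three edges), and if $G[T]$ has fewer than two edges there are none. Summing over all $3$-subsets gives $S_{3,2}=p+3t$.

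Combining these counts, $S_{3,2}-2S_{3,3}=(p+3t)-2t=p+t=k_{3}$, which is the claimed identity. The only genuine subtlety — and the point where a careless reading goes wrong — is distinguishing an induced connected subgraph of order three, which is counted once per vertex triple, from a subgraph of order three and size two or three, where a single triangle contributes three distinct copies of $P_{3}$; carrying out that bookkeeping correctly is essentially the whole content of the lemma, so I expect the coefficient $3$ (equivalently the $-2S_{3,3}$ correction term) to be the step most in need of care.
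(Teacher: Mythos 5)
Your argument is correct and follows the same route as the paper: classify each $3$-subset by whether it induces $P_{3}$ or $K_{3}$, note that a triangle contains three copies of $P_{3}$ while an induced $P_{3}$ contains one, and deduce $S_{3,2}-2S_{3,3}=p+3t-2t=p+t=k_{3}$. You simply make explicit the bookkeeping that the paper's two-sentence proof leaves implicit.
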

\begin{proof}
	Any induced connected subgraph in $G$ with order three will be isomorphic either to a cycle or a path of order three. If the induced connected subgraph in $G$ with order three is a cycle then it will count three subgraphs which are isomorphic to a path of order three.
\end{proof}

\begin{lemma}\label{lemma:bipartiteRegular2}
	Let $G$ be a $\Delta$-regular simple graph. then
	\[
	S_{3,2} = n\binom{\Delta}{2}\text{.}
	\]
\end{lemma}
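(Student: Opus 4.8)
The plan is to count, in a $\Delta$-regular graph $G$ on $n$ vertices, the number of (not necessarily induced) subgraphs of order three and size two — that is, paths $P_{3}$ — by a standard ``cherry-counting'' argument. A subgraph on three vertices with exactly two edges is precisely a path $a$–$b$–$c$, and such a path is uniquely determined by its center vertex $b$ together with an unordered pair $\{a,c\}$ of distinct neighbours of $b$.

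First I would fix a vertex $v$ of $G$ and count the $P_{3}$'s centered at $v$. Since $G$ is $\Delta$-regular, $v$ has exactly $\Delta$ neighbours, and any choice of two of them gives a distinct cherry with center $v$; conversely every cherry centered at $v$ arises this way. Hence the number of $P_{3}$'s with center $v$ is $\binom{\Delta}{2}$. Next I would observe that every $P_{3}$ has a unique center (the vertex of degree two in the subgraph), so summing over all $n$ choices of center counts each cherry exactly once, giving $S_{3,2} = n\binom{\Delta}{2}$.

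I do not expect any genuine obstacle here; the only point requiring a word of care is the uniqueness of the center, which guarantees there is no overcounting when we sum over $v \in V(G)$, and the fact that ``size two on three vertices'' forces the subgraph to be connected (a path), matching the definition of $S_{3,2}$ in Lemma \ref{lemma: bipartiteRegular1}. Everything else is the elementary identity $\sum_{v} \binom{\deg(v)}{2} = n\binom{\Delta}{2}$ specialized to the regular case.
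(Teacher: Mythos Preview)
Your proposal is correct and follows essentially the same approach as the paper: fix a vertex $v$, count the $\binom{\Delta}{2}$ paths of length two centered at $v$ by choosing two neighbours, and sum over the $n$ vertices. The paper's proof is terser and omits your explicit remark about the uniqueness of the center, but the argument is the same cherry-counting idea.
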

\begin{proof}
	The number of connected subgraphs in $G$ with order three and size two containing a specific vertex $v$ as the common vertex between the two edges is formed by choosing any two vertices from the neighbors is $=\binom{\Delta}{2}$. By multiplying with the number of all vertices $n$, the result follows.
\end{proof}
\begin{lemma}\label{lemma:bipartiteRegular3}
	Let $G$ be a $\Delta$-regular connected simple graph with order $2 \Delta$. $G$ is isomorphic to  $K_{\Delta,\Delta}$ if and only if $k_{3} = n \binom{\Delta}{2}$.
\end{lemma}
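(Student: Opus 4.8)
The plan is to separate the statement into its trivial direction and the single structural fact that does the work, namely that a connected triangle-free $\Delta$-regular graph on $2\Delta$ vertices must be $K_{\Delta,\Delta}$.

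For the direction $G \cong K_{\Delta,\Delta} \Rightarrow k_{3} = n\binom{\Delta}{2}$, I would just observe that $K_{\Delta,\Delta}$ is bipartite, hence has no triangle, so $S_{3,3} = 0$; Lemma \ref{lemma:bipartiteRegular2} gives $S_{3,2} = n\binom{\Delta}{2}$, and Lemma \ref{lemma: bipartiteRegular1} then yields $k_{3} = S_{3,2} - 2S_{3,3} = n\binom{\Delta}{2}$.

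For the converse, assume $k_{3} = n\binom{\Delta}{2}$. Lemmas \ref{lemma: bipartiteRegular1} and \ref{lemma:bipartiteRegular2} together force $n\binom{\Delta}{2} = n\binom{\Delta}{2} - 2S_{3,3}$, hence $S_{3,3} = 0$, i.e. $G$ is triangle-free. The substance of the proof is then to show that a connected triangle-free $\Delta$-regular graph $G$ with $|V(G)| = 2\Delta$ is $K_{\Delta,\Delta}$. I would fix a vertex $v$; triangle-freeness makes $N(v)$ an independent set of size $\Delta$, and $W := V(G) \setminus (\{v\} \cup N(v))$ has size $\Delta - 1$. Each $u \in N(v)$ is adjacent to $v$ and, by independence of $N(v)$, to no other vertex of $N(v)$, so its remaining $\Delta - 1$ neighbours all lie in $W$; since $|W| = \Delta - 1$, $u$ is joined to every vertex of $W$. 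Consequently each $w \in W$ is already adjacent to all $\Delta$ vertices of $N(v)$, which saturates its degree, so $W$ induces no edges and no vertex of $W$ is adjacent to $v$. Thus $N(v)$ and $\{v\} \cup W$ are independent sets of size $\Delta$ each, with every cross pair an edge; that is, $G \cong K_{\Delta,\Delta}$.

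The only place needing care is this last structural argument, and the crucial observation is the double degree count: $|W| = \Delta - 1$ forces each vertex of $N(v)$ to be completely joined to $W$, and then the degree of any vertex of $W$ is already used up by $N(v)$. I do not expect a genuine obstacle here, since $G$ is extremely dense ($\delta(G) = n/2$); one could alternatively invoke the Andr\'asfai--Erd\H{o}s--S\'os theorem to obtain bipartiteness and then finish by an edge count, but the direct argument above needs no external input.
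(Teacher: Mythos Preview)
Your proof is correct and follows essentially the same approach as the paper's: both directions invoke Lemmas~\ref{lemma: bipartiteRegular1} and~\ref{lemma:bipartiteRegular2} to reduce to triangle-freeness, and for the converse both pick a vertex $v$, note that $N(v)$ is independent, and use the count $|V(G)\setminus(\{v\}\cup N(v))|=\Delta-1$ to force each neighbour of $v$ to be completely joined to that complement. The paper's version is terser (it stops at ``by constructing the graph, we obtain that $G$ is isomorphic to $K_{\Delta,\Delta}$''), whereas you spell out the saturation of the degrees in $W$ and the resulting bipartition explicitly; your write-up is more complete, but the underlying idea is identical.
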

\begin{proof}
	First, we show that if a graph $G$ is isomorphic to $K_{\Delta,\Delta}$ then $k_{3} = n \binom{\Delta}{2}$.
	
	$G$ is isomorphic to $K_{\Delta,\Delta}$ then $G$ has no cycles of order three. By Lemma \ref{lemma: bipartiteRegular1} and Lemma \ref{lemma:bipartiteRegular2}, the result follows.\\
	
	Now we prove the converse. 
	By Lemma \ref{lemma: bipartiteRegular1}, $k_{3} = S_{3,2}$ and $S_{3,3}=0$. $G$ is free of cycles of order three. Any vertex $v$ is adjacent to $\Delta$ pairwise nonadjacent vertices which have a degree $\Delta$ and need to be adjacent to $\Delta-1$ other vertices which are not adjacent to $v$. By constructing the graph, we obtain that $G$ is isomorphic to $K_{\Delta,\Delta}$.
\end{proof}

\begin{proposition}
	A simple graph $G$ is isomorphic to the complete bipartite graph $K_{n,m}$
	if and only if  
	\[
	da(G;x,y) = nxy^{n} + mxy^{m} + y^{n+m}\sum_{i=1}^{n}\sum_{j=1}^{m} \binom{n}{i}\binom{m}{j} x^{i+j}y^{\min\{2i-n,2j-m\}} \text{,}
	\]
	where $n,m$ are positive integers.
\end{proposition}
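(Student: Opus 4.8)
The plan is to prove both implications: the forward direction is a direct enumeration, and the converse is where the real work lies.

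For the forward direction, suppose $G\cong K_{n,m}$ with independent parts $A$ of size $n$ and $B$ of size $m$. A nonempty $S\subseteq V(G)$ inducing a connected subgraph is either a singleton or meets both parts, and in the latter case, with $i=|S\cap A|\ge 1$ and $j=|S\cap B|\ge 1$, we have $G[S]\cong K_{i,j}$. A singleton in $A$ (degree $m$) contributes $xy^{(n+m)-m}=xy^{n}$ and a singleton in $B$ contributes $xy^{m}$, giving the terms $nxy^{n}+mxy^{m}$. For $S$ with $i$ vertices of $A$ and $j$ of $B$, a vertex of $A$ in $S$ satisfies $\delta_{S}-\delta_{\bar S}=2j-m$ and a vertex of $B$ in $S$ satisfies $\delta_{S}-\delta_{\bar S}=2i-n$, so $f_{x}(S)=i+j$ and $f_{y}(S)=(n+m)+\min\{2i-n,2j-m\}$, and there are $\binom ni\binom mj$ such sets; summing over $1\le i\le n$ and $1\le j\le m$ produces the stated polynomial. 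This part is routine.

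For the converse, let $H$ realise the displayed polynomial. From $[x]da(H;x,y)=ny^{n}+my^{m}$ together with Propositions \ref{Proposition: Order}, \ref{Proposition: Connected} and \ref{Proposition: Degree sequence}, the graph $H$ has order $n+m$, is connected, and has degree sequence consisting of $n$ vertices of degree $m$ and $m$ vertices of degree $n$ (hence $\tfrac12(nm+mn)=nm$ edges). Since the polynomial is invariant under exchanging $n\leftrightarrow m$ (swap the summation indices $i\leftrightarrow j$), I may assume $n\le m$. Next, by Proposition \ref{Proposition: number of connected induced subgraphs} the number $k_{3}$ of connected induced order-$3$ subgraphs of $H$ is $[x^{3}]da(H;x,1)=n\binom m2+m\binom n2$ (these are exactly the terms with $i+j=3$), while also $S_{3,2}=\sum_{v}\binom{\deg_{H}(v)}{2}=n\binom m2+m\binom n2$ from the degree sequence; Lemma \ref{lemma: bipartiteRegular1} then forces $S_{3,3}=0$, so $H$ is triangle-free. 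To finish: if $n=m$, then $H$ is a connected $n$-regular graph of order $2n$ with $k_{3}=2n\binom n2$, so $k_{3}$ equals the order times $\binom n2$ and Lemma \ref{lemma:bipartiteRegular3} yields $H\cong K_{n,n}$; if $n<m$, let $A$ be the set of the $n$ vertices of degree $m$, and note that were $a,a'\in A$ adjacent, triangle-freeness would make $N(a)\setminus\{a'\}$ and $N(a')\setminus\{a\}$ disjoint subsets of $V(H)\setminus\{a,a'\}$, giving $2(m-1)\le n+m-2$, i.e. $m\le n$, a contradiction. Hence $A$ is independent, each vertex of $A$ has all $m$ of its neighbours among the $m$ vertices of degree $n$ and so is adjacent to all of them, and these $nm$ edges exhaust $E(H)$, so $H\cong K_{n,m}$.

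The main obstacle is the converse when $n\ne m$: order, size, connectedness and degree sequence do not on their own determine $K_{n,m}$ (for example $K_{3,3}$ and the triangular prism agree on all of these), so one genuinely needs the order-$3$ subgraph count extracted from $[x^{3}]da(H;x,1)$ to obtain triangle-freeness; once triangle-freeness is in hand, the short neighbourhood-counting argument closes the non-regular case and Lemma \ref{lemma:bipartiteRegular3} closes the regular one.
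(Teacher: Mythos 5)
Your proof is correct, and the forward direction and the regular case ($n=m$, via Lemma \ref{lemma:regularGraph_setComponenets}'s companion Lemma \ref{lemma:bipartiteRegular3}) coincide with the paper's. Where you genuinely diverge is the non-regular case. The paper, after fixing $n>m$, reads off a single pair-coefficient, asserting $[x^{2}y^{m+2}]da(G;x,y)=0$ and deducing that no two vertices of degree $n$ are adjacent; as printed this is off, since the unique $x^{2}$-term of the displayed polynomial is $nm\,x^{2}y^{m+2}$ (an edge meeting a degree-$n$ vertex and an edge joining two degree-$n$ vertices both yield $f_{y}=m+2$, because $f_{y}$ only sees the minimum), and the coefficient that actually vanishes is $[x^{2}y^{n+2}]$, which detects edges between two degree-$m$ vertices --- with that correction the paper's edge count closes the case. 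You instead extract $k_{3}=[x^{3}]da(H;x,1)=n\binom{m}{2}+m\binom{n}{2}$, compare it with $S_{3,2}=\sum_{v}\binom{\deg v}{2}$ computed from the degree sequence (a mild extension of Lemma \ref{lemma:bipartiteRegular2} beyond the regular case), and invoke Lemma \ref{lemma: bipartiteRegular1} to get $S_{3,3}=0$, i.e.\ triangle-freeness; the disjoint-neighbourhood inequality $2(m-1)\leq n+m-2$ then forces the $n$ vertices of degree $m$ to form an independent set, and the edge count $nm$ finishes. Your route costs a little more counting but buys robustness: it does not depend on the delicate (and, in the paper, misstated) identification of which $x^{2}y^{k}$ coefficient separates the two kinds of edges, it makes the $n\leftrightarrow m$ symmetry explicit, and it unifies the regular and non-regular cases around the single invariant $k_{3}$.
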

\begin{proof}
	First, we show that a simple graph $G$ which is isomorphic to the complete bipartite graph $K_{n,m}$, has the given defensive alliance polynomial. Let $K_{n,m}$ be of the form $G(U \cup W,E)$ where $|U|=n$, $|W|=m$ and $U,W$ are the parts of $K_{n,m}$. 
	
	The non-empty subsets of $V(G)$ which induce connected subgraphs in $G$, can be partitioned into the following parts: 
	The part containing the sets of cardinality one from $U$ in which each set contributes the term $xy^{n}$ and by summing, we get the term $nxy^{n}$. The part containing the sets of cardinality one from $W$ in which each set contributes the term $mxy^{m}$ and by summing, we get the term $mxy^{m}$. The part containing the sets of cardinality more than one in which we choose subset of cardinality $i$ from $U$ and another subset of cardinality $j$ from $W$ which contributes the term $y^{n+m}\left(x^{i+j}y^{\min\{2i-m,2j-n\}}\right)$ and by summing, we get the term \\ $y^{n+m}\sum_{i=1}^{n}\sum_{j=1}^{m} \binom{n}{i}\binom{m}{j} x^{i+j}y^{\min\{2j-m,2i-n\}}$.
	
	Now we prove the converse. Let $n,m$ be positive integers, and $H$ is a graph with
	\[
	da(H;x,y) = nxy^{n} + mxy^{m} + y^{n+m}\sum_{i=1}^{n}\sum_{j=1}^{m} \binom{n}{i}\binom{m}{j} x^{i+j}y^{\min\{2i-n,2j-m\}}.
	\]
	By Proposition \ref{Proposition: Order}, the order of $H$ equals $n+m$. By Proposition \ref{Proposition: Size}, the size of $H$ equals $nm$. By Proposition \ref{Proposition: Connected}, $H$ is connected. By Proposition \ref{Proposition: Degree sequence}, the degree sequence of $H$ is $(n,n,\cdots,n,m,m, \cdots ,m)$.
	Partition $V(H)$ into two sets $W,U$ where $W$ contains all vertices with degree $n$ and $U$ contain all vertices of degree $m$.
	\begin{itemize}
		\item Case $1$: $n \neq m$, assume $n > m$.	Note that $[x^{2}y^{m+2}]da(G;x,y)=0$, since this happens only if there is no edge between two vertices with degree $n$. By counting the edges and joining the vertices from $W$ to $U$, $H$ is isomorphic to $K_{n,m}$
		\item Case $2$: $n=m$ then $H$ is regular. Note that:
		\begin{align*}
			k_{3} =& [x^{3}]da(G;x,1)  \\
			=& 2 n \binom{n}{2} \text{.}
		\end{align*}
		Consequently, by Lemma \ref{lemma:bipartiteRegular3}, $H$ is isomorphic to the complete bipartite graph $K_{n,n}$.
	\end{itemize}
\end{proof}

\subsection{The wheel graph}\label{Subsection: Wheel}
\begin{definition}
	Let $n$ be a positive integer larger than three. The \index{wheel graph}\emph{wheel graph} denoted by $W_{n}$ is defined by the graph join $C_{n}+K_{1}$.
\end{definition}
\begin{proposition}
	A simple graph $G$ is isomorphic to the wheel $W_{n}$ if and only if
	\begin{align*}
		[x]da(G;x,y) =& ny^{n-2}+ y \text{ and} \\ 
		[x^{n}]da(G;x,y) =& (n+1)y^{n+2}  \text{ and} \\
		[x^{n+1}]da(G;x,y) =& y^{n+4} \text{, where $n \geq 3$.}
	\end{align*}
	
\end{proposition}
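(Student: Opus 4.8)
The plan is to follow the same two-directional template used for the previous classes. For the forward direction, I would start from $W_n = C_n + K_1$ with hub vertex $h$ and rim vertices $v_1,\dots,v_n$ (so $\deg(h)=n$, $\deg(v_i)=3$, and the total order is $n+1$). For the three coefficient identities I only need information about connected induced subgraphs of order $1$, $n$, and $n+1$. Order one is immediate from Proposition~\ref{Proposition: Degree sequence}: there are $n$ vertices of degree $3$ each contributing $xy^{(n+1)-3}=xy^{n-2}$ and one vertex (the hub) of degree $n$ contributing $xy^{(n+1)-n}=xy$, giving $[x]da(G;x,y)=ny^{n-2}+y$. For order $n+1$ there is only the full vertex set $V(G)$; I compute $f_y(V(G))=\min_{u}\{\delta_{V}(u)-0+(n+1)\}$, the minimum degree being $3$, so the term is $x^{n+1}y^{n+4}$. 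For order $n$ I would enumerate the connected induced subgraphs on $n$ of the $n+1$ vertices: deleting the hub leaves $C_n$, which is connected, contributing a term with $x^n$; deleting a rim vertex $v_i$ leaves the hub joined to a path $P_{n-1}$, also connected, and there are $n$ such subgraphs. I then compute $f_y$ for each: in $G - h = C_n$ every vertex has $\delta_S=2$ and $\delta_{\bar S}=1$, so $f_y=\min\{2-1+(n+1)\}=n+2$; in $G-v_i$ the hub has $\delta_S=n-1$, $\delta_{\bar S}=1$; an internal rim vertex has $\delta_S=3$, $\delta_{\bar S}=0$; the two rim endpoints adjacent to the deleted $v_i$ have $\delta_S=2$, $\delta_{\bar S}=1$, so the minimum is again $2-1+(n+1)=n+2$. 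Hence all $n+1$ order-$n$ subgraphs contribute $x^n y^{n+2}$, giving $(n+1)y^{n+2}$, and all three identities follow.

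For the converse, suppose $H$ satisfies the three identities. From $[x]da(H;x,y)=ny^{n-2}+y$ together with Proposition~\ref{Proposition: Degree sequence} I read off that $H$ has order $n+1$ (sum of coefficients at $x^1$ evaluated at $y=1$ is $n+1$, or equivalently use Proposition~\ref{Proposition: Order}) and degree sequence $(n,3,3,\dots,3)$ with one vertex of degree $n$ and $n$ vertices of degree $3$. From $[x^{n+1}]da(H;x,y)=y^{n+4}\neq 0$ and Proposition~\ref{Proposition: Connected}, $H$ is connected. The unique degree-$n$ vertex $h$ is adjacent to all other $n$ vertices (it must be, since it has $n$ neighbours among the $n$ other vertices). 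Deleting $h$ leaves an $n$-vertex graph $H'$ in which every vertex, having lost exactly its edge to $h$, has degree $2$; so $H'$ is a disjoint union of cycles. It remains to show $H'$ is a single $n$-cycle, i.e.\ $H'=C_n$, which forces $H\cong W_n$. This is where the hypothesis $[x^n]da(H;x,y)=(n+1)y^{n+2}$ does the work: I count the connected induced subgraphs on $n$ vertices. Deleting $h$ gives $H'$, which is connected iff $H'$ is a single cycle; deleting a rim vertex $v_i$ always gives a connected graph (the hub still joins everything). So the number of order-$n$ connected induced subgraphs is $n + \epsilon$ where $\epsilon=1$ if $H'$ is connected and $\epsilon=0$ otherwise (if $H'$ is disconnected, $H-h$ is disconnected and contributes nothing, while each $H-v_i$ is still counted). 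Setting $y=1$ in the hypothesis gives $n+1$ such subgraphs, forcing $\epsilon=1$, hence $H'=C_n$ and $H\cong W_n$.

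I expect the main obstacle to be the order-$n$ exponent bookkeeping in the forward direction: one must check that $f_y$ equals $n+2$ for \emph{every} order-$n$ connected induced subgraph (both $G-h$ and each $G-v_i$), which requires correctly identifying the minimizing vertex in the path-plus-hub graphs $G - v_i$ — in particular noticing that the two rim endpoints adjacent to the removed vertex, not the hub and not the interior rim vertices, tie for the minimum of $\delta_S-\delta_{\bar S}$. A secondary subtlety in the converse is being careful that when $H'$ is a disjoint union of several cycles each of length $\geq 3$, none of those components has order $n$ (each is strictly smaller), so $H-h$ genuinely contributes $0$ to $[x^n]$ rather than some smaller cycle masquerading as an order-$n$ subgraph; this is automatic since $H'$ has exactly $n$ vertices and more than one component means each has fewer than $n$. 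Once these points are handled, the degree sequence plus connectivity plus the cycle count pin down $W_n$ uniquely.
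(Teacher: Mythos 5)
Your proposal is correct and follows essentially the same route as the paper: direct enumeration of the order-$1$, order-$n$ and order-$(n+1)$ contributions for the forward direction, and for the converse the extraction of order, connectivity and degree sequence followed by showing that deleting the degree-$n$ vertex leaves $C_{n}$. The only cosmetic difference is that the paper invokes Proposition~\ref{Proposition: number of cut vertices} to conclude that every vertex-deleted subgraph is connected, whereas you count the $n+1$ order-$n$ connected induced subgraphs directly; both arguments amount to reading off $[x^{n}]da(H;x,1)=n+1$.
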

\begin{proof}
	First, we show that a graph $G$ which is isomorphic to a wheel $W_{n}$ has the above properties in the proposition. Let $G$ be of the form in Figure \ref{figure: defensive allaince the wheel}.
	\begin{figure}[ht!]
		\centering
		\begin{tikzpicture}[baseline=(current bounding box.north)]
		\tikzset{vertex/.style = {shape=circle,draw,minimum size=2.5em}}
		
		\draw (330:2) arc (-30:210:2);
		\draw [dotted] (210:2) arc (210:330:2);
		\node[vertex,fill=white] (v0) at  (0,0) {$v_{0}$};
		\node[vertex,fill=white] (v1) at (90:2) {$v_{1}$};
		\node[vertex,fill=white] (v2) at (150:2) {$v_{2}$};
		\node[vertex,fill=white] (v3) at  (210:2) {$v_{3}$};
		\node[vertex,fill=white] (v4) at  (330:2) {$v_{n-1}$};
		\node[vertex,fill=white] (vn) at  (30:2) {$v_{n}$};


		\draw (v0) to (v1);
		\draw (v0) to (v2);
		\draw (v0) to (v3);
		\draw (v0) to (v4);
		\draw (v0) to (vn);
		
		\end{tikzpicture}
		\caption{A wheel graph}
		\label{figure: defensive allaince the wheel}
	\end{figure}
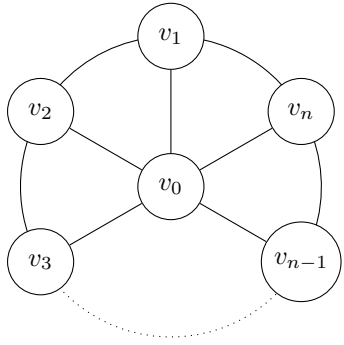
	
	The subsets of $V(G)$ with cardinality one which induce connected subgraphs in $G$, can be partitioned into the following parts: The part $\{ \{ v_{0} \} \}$ in which $\{ v_{0} \}$ contributes the term $xy$. The part $\{ \{ v_{1} \},\{ v_{2} \}, \cdots , \{ v_{n} \} \}$ in which every set contributes the term $xy^{n-2}$ and by summing, we get the term $nxy^{n-2}$.
	
	The set $V(G)$ contributes the term $x^{n+1}y^{n+4}$. And if we delete any vertex from $V(G)$, we get a set which contributes the term $x^{n}y^{n+2}$ and by summing, we get $(n+1)x^{n}y^{n+2}$.
	
	Now we prove the converse. Let $n$ be an integer, $n\geq 3$, and $H$ is a graph with	
	\begin{align*}
		[x]da(H;x,y) =& ny^{n-2}+ y \text{ and} \\ 
		[x^{n}]da(H;x,y) =& (n+1)y^{n+2}  \text{ and} \\
		[x^{n+1}]da(H;x,y) =& y^{n+4} \text{.}
	\end{align*}
	
	By Proposition \ref{Proposition: Order}, the order of $H$ equals $n+1$. By Proposition \ref{Proposition: Connected}, $H$ is connected. By Proposition \ref{Proposition: Degree sequence}, the degree sequence of $H$ is $(n,3,3, \cdots ,3)$. By Proposition \ref{Proposition: number of cut vertices}, the number of cut vertices is zero. Hence all the subgraphs $G \setminus \{v\}$ where $v \in V(G)$, are all connected. Let $v_{0}$ be the vertex with degree $n$. The specific graph $G \setminus \{v_{0}\}$ is connected and with degree sequence $(2,2, \cdots ,2)$ which is isomorphic to the cycle graph $C_{n}$. By connecting the vertex $v_{0}$ to every vertex in $C_{n}$, $H$ is constructed which is isomorphic to the wheel graph $ W_{n}$.	
\end{proof}

\bigskip

\subsection{The open wheel graph}\label{Subsection: open wheel}
\begin{definition}
	Let $n$ be a positive integer larger than two. The \index{open wheel graph}\emph{open wheel graph} denoted by $W_{n}^{'}$ is defined by the graph join $P_{n}+K_{1}$.  This graph is sometimes also known as \emph{Fan}.
\end{definition}
\begin{proposition}
	A simple graph $G$ is isomorphic to the open wheel $W_{n}^{'}$
	if and only if 
	\begin{align*}
		[x]da(G;x,y) =& 2y^{n-1} + (n-2)y^{n-2}+ xy \text{ and} \\ 
		[x^{n}]da(G;x,y) =& 3y^{n+1}+(n-2)y^{n+2} \text{ and} \\
		[x^{n+1}]da(G;x,y) =& y^{n+3} \text{, where $n \geq 4$.}
	\end{align*}
	
\end{proposition}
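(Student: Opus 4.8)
The plan is to follow the template of the wheel-graph proposition: in the forward direction read the three coefficient polynomials straight off the structure of $W_{n}^{'}=P_{n}+K_{1}$, and in the converse recover from them the order, connectedness, degree sequence and number of cut vertices of $H$, and then rebuild $H$.

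For the forward direction I would write $W_{n}^{'}$ with path $v_{1}v_{2}\cdots v_{n}$ and hub $v_{0}$ adjacent to every $v_{i}$. The order is $n+1$; the hub has degree $n$, the path-ends $v_{1},v_{n}$ have degree $2$, and the interior vertices $v_{2},\dots,v_{n-1}$ have degree $3$, so these $2$, $n-2$ and $1$ vertices contribute, via Proposition~\ref{Proposition: Degree sequence}, the terms $2y^{n-1}$, $(n-2)y^{n-2}$ and $y$ of $[x]da(G;x,y)$. The only connected induced subgraph of order $n+1$ is $V(G)$, and there $\bar{S}=\emptyset$, so $f_{y}(V(G))=\min_{u}\delta_{S}(u)+(n+1)=2+(n+1)=n+3$, which gives $[x^{n+1}]da(G;x,y)=y^{n+3}$. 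For $[x^{n}]da$ I would go through the $n+1$ sets $V(G)\setminus\{v\}$; the hub keeps the rest of $G$ joined, so $G$ has no cut vertex and all $n+1$ of these sets induce connected subgraphs and contribute. I then compute $f_{y}$ case by case: deleting the hub leaves $P_{n}$, where a path-end keeps one neighbour in $S$ and loses one (the hub) to $\bar{S}$, so $\min_{u}(\delta_{S}(u)-\delta_{\bar{S}}(u))=0$ and the term is $x^{n}y^{n+1}$; deleting $v_{2}$ or $v_{n-1}$ again leaves a path-end with one neighbour in $S$ and one in $\bar{S}$, so the term is again $x^{n}y^{n+1}$; deleting a path-end $v_{1}$ or $v_{n}$, or a deeper interior vertex $v_{i}$ with $3\le i\le n-2$, leaves every vertex that lost a neighbour with $\delta_{S}-\delta_{\bar{S}}\ge 1$ while the surviving path-ends have difference $2$, so $\min=1$ and the term is $x^{n}y^{n+2}$. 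Counting, there are exactly three sets of the first kind ($v_{0},v_{2},v_{n-1}$, distinct since $n\ge 4$) and $n-2$ of the second kind ($v_{1},v_{n}$ together with $v_{3},\dots,v_{n-2}$, the last family empty when $n=4$), hence $[x^{n}]da(G;x,y)=3y^{n+1}+(n-2)y^{n+2}$.

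For the converse, let $H$ satisfy the three identities. Setting $y=1$ in $[x]da$ and invoking Proposition~\ref{Proposition: Order} gives that $H$ has order $n+1$; since $[x^{n+1}]da(H;x,y)=y^{n+3}\ne 0$ while the order is $n+1$, the $x$-degree of $da(H;x,y)$ is $n+1$, so $H$ is connected by Proposition~\ref{Proposition: Connected}. Proposition~\ref{Proposition: Degree sequence} applied to $[x]da$ shows the degree sequence of $H$ is $(n,3,\dots,3,2,2)$ with $n-2$ threes, and $[x^{n}]da(H;x,1)=3+(n-2)=n+1$, so by Proposition~\ref{Proposition: number of cut vertices} $H$ has no cut vertex. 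Because $n\ge 4$, there is a unique vertex $v_{0}$ of degree $n$; as $H$ has only $n+1$ vertices, $v_{0}$ is adjacent to all the others, so $H=H'+K_{1}$ with $H'=H-v_{0}$. Each remaining vertex loses exactly its edge to $v_{0}$, so $H'$ is a graph on $n$ vertices with degree sequence $(2,\dots,2,1,1)$; since $v_{0}$ is not a cut vertex, $H'$ is connected, and a connected graph on $n$ vertices with that degree sequence must be the path $P_{n}$. Therefore $H\cong P_{n}+K_{1}=W_{n}^{'}$.

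The main obstacle is the case analysis for $[x^{n}]da$ in the forward direction: one must track, for each deleted vertex, which surviving vertex minimizes $\delta_{S}(u)-\delta_{\bar{S}}(u)$, and in particular see why deleting $v_{2}$ or $v_{n-1}$ behaves like deleting the hub (giving $y^{n+1}$) whereas deleting a path-end or a deeper interior vertex gives $y^{n+2}$. The hypothesis $n\ge 4$ is exactly what makes $v_{0},v_{2},v_{n-1}$ distinct and keeps the hub's degree strictly above every other vertex degree, so that it is uniquely identifiable in the converse.
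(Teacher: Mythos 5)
Your proposal is correct and follows essentially the same route as the paper: read off the order-one and order-$n$, order-$n+1$ contributions directly (with the same case split identifying $v_{0},v_{2},v_{n-1}$ as the three deletions giving $y^{n+1}$), then in the converse recover order, connectedness, degree sequence and the absence of cut vertices, delete the degree-$n$ vertex to find a connected graph with degree sequence $(2,\dots,2,1,1)$, i.e.\ $P_{n}$, and rejoin. Your case analysis for $[x^{n}]da$ is in fact spelled out in more detail than the paper's, and you correctly read the stray $xy$ in the stated formula for $[x]da$ as the hub's contribution $y$.
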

\begin{proof}
	First, we show that a graph $G$ which is isomorphic to an open wheel $W_{n}^{'}$, has the above properties in the proposition. Let $G$ be of the form in Figure \ref{figure: defensive allaince the Open wheel graph}.
	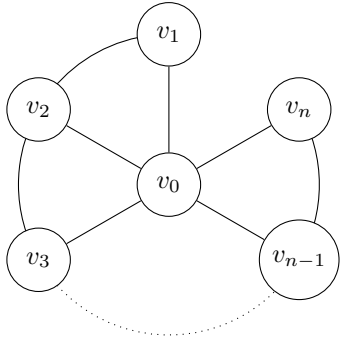
\begin{figure}[ht!]
		\centering	
		\begin{tikzpicture}[baseline=(current bounding box.north)]
		\tikzset{vertex/.style = {shape=circle,draw,minimum size=2.4em}}
		
		\draw (90:2) arc (90:210:2);
		\draw (330:2) arc (-30:30:2);
		\draw [dotted] (210:2) arc (210:330:2);
		
		\node[vertex,fill=white] (v0) at  (0,0) {$v_{0}$};
		\node[vertex,fill=white] (v1) at (90:2) {$v_{1}$};
		\node[vertex,fill=white] (v2) at (150:2) {$v_{2}$};
		\node[vertex,fill=white] (v3) at  (210:2) {$v_{3}$};
		\node[vertex,fill=white] (v4) at  (330:2) {$v_{n-1}$};
		\node[vertex,fill=white] (vn) at  (30:2) {$v_{n}$};		
		
		\draw (v0) to (v1);
		\draw (v0) to (v2);
		\draw (v0) to (v3);
		\draw (v0) to (v4);
		\draw (v0) to (vn);
		\end{tikzpicture}
		\caption{An open wheel graph}
		\label{figure: defensive allaince the Open wheel graph}
	\end{figure}
	
	The subsets of $V(G)$ with cardinality one which induce connected subgraphs in $G$, can be partitioned into the following parts: The part $\{ \{ v_{0} \} \}$ in which $\{ v_{0} \}$ contributes the term $xy$. The part $\{ \{ v_{2} \},\{ v_{3} \}, \cdots , \{ v_{n-1} \} \}$ in which every set contributes the term $xy^{n-2}$ and by summing, we get the term $(n-2)xy^{n-2}$. The part $\{ \{ v_{1} \} \},\{ v_{n} \} \}$ in which each set contributes the term $xy^{n-1}$ and by summing, we get $2xy^{n-1}$.
	
	The set $V(G)$ contributes the term $x^{n+1}y^{n+3}$.
	
	Each of the subsets $ V(G) \setminus \{v_{2}\} , V(G) \setminus \{v_{n-1}\}$ and $V(G) \setminus \{v_{0}\} $ contributes the term $x^{n}y^{n+1}$ and by summing, we get the term $3x^{n}y^{n+1}$. Each subset of cardinality $n$ but not the previous, contributes the term $x^{n}y^{n+2}$ and by summing, we get $(n-2)x^{n}y^{n+2}$.
	
	Now we prove the converse. Let $n$ be an integer, $n\geq 4$, and $H$ is a graph with 
	\begin{align*}
		[x]da(H;x,y) =& 2y^{n-1} + (n-2)y^{n-2}+ xy \text{ and} \\ 
		[x^{n}]da(H;x,y) =& 3y^{n+1}+(n-2)y^{n+2} \text{ and} \\
		[x^{n+1}]da(H;x,y) =& y^{n+3} \text{.} 
	\end{align*}
	By Proposition \ref{Proposition: Order}, the order of $H$ equals $n+1$. By Proposition \ref{Proposition: Connected}, $H$ is connected. By Proposition \ref{Proposition: Degree sequence}, the degree sequence of $H$ is $(n,3,3, \cdots ,3,2,2)$. By Proposition \ref{Proposition: number of cut vertices}, the number of cut vertices is zero. Hence all the graphs $G \setminus \{v\}$ where $v \in V(G)$, are all connected. Let the vertex with degree $n$ be $v_{0}$. The specific subgraph $G \setminus \{v_{0}\}$ is connected and with degree sequence $(2,2, \cdots ,2,1,1)$ which is isomorphic to the path graph $P_{n}$. By connecting the vertex $v_{0}$ to every vertex in $P_{n}$, $H$ is constructed which is isomorphic to the open wheel graph $W_{n}^{'}$.	
\end{proof}

\bigskip

\subsection{The friendship graph}\label{Subsection: friendship graph}
\begin{definition}
	Let $n$ be a positive integer. The \index{friendship graph}\emph{friendship graph} denoted by $F_{n}$ is defined by the graph join $nK_{2}+K_{1}$. This graph is also known as \emph{Windmill graph}.
\end{definition}
\begin{proposition}
	A simple graph $G$ is isomorphic to the friendship $F_{n}$ 
	if and only if  
	\begin{align*}
		[x]da(G;x,y) =& 2ny^{2n-1} +y \text{, where $n$ is a positive integer.}
	\end{align*}
	
\end{proposition}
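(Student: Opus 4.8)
The plan is to treat the two directions separately, using Proposition~\ref{Proposition: Order} and Proposition~\ref{Proposition: Degree sequence} as the bridge between the polynomial and the degree data of the graph, exactly as in the earlier characterizations.

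For the forward direction I would start from the observation that $[x]da(G;x,y)$ collects precisely the contributions of the singleton subsets $\{v\}$: each such set is nonempty and induces a connected subgraph, and since $\delta_{\{v\}}(v)=0$ and $\delta_{\overline{\{v\}}}(v)=\deg(v)$ it contributes the term $x\,y^{f_y(\{v\})}=x\,y^{\,N-\deg(v)}$, where $N=|V(G)|$. For $F_n=nK_2+K_1$ we have $N=2n+1$; the hub vertex $v_0$ has degree $2n$ and contributes $x\,y$, while each of the remaining $2n$ vertices has degree $2$ and contributes $x\,y^{2n-1}$. Summing over all singletons yields $[x]da(F_n;x,y)=2n\,y^{2n-1}+y$, as claimed.

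For the converse, suppose $H$ is a simple graph with $[x]da(H;x,y)=2n\,y^{2n-1}+y$. Substituting $y=1$ and applying Proposition~\ref{Proposition: Order} gives $|V(H)|=2n+1$. Then Proposition~\ref{Proposition: Degree sequence}, which reads the number of vertices of degree $k$ off as $[xy^{(2n+1)-k}]da(H;x,y)$, forces $H$ to have exactly one vertex of degree $2n$ and exactly $2n$ vertices of degree $2$. I would finish by reconstruction: the vertex $v_0$ of degree $2n$ in a graph on $2n+1$ vertices must be adjacent to every other vertex, so each of the $2n$ remaining vertices spends one of its two incident edges on $v_0$ and has exactly one further incident edge, which must join it to another non-hub vertex. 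Hence $H-v_0$ is a $1$-regular simple graph on $2n$ vertices, i.e.\ a perfect matching $nK_2$, and therefore $H\cong nK_2+K_1=F_n$.

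I do not expect a genuine obstacle here: the forward computation is routine bookkeeping of singleton contributions, and in the converse the only step needing a word of justification is that a degree-$2n$ vertex in a graph on $2n+1$ vertices is universal and that a $1$-regular graph is unique up to isomorphism — both immediate. The one place to be careful is notational, namely keeping the order $N=2n+1$ distinct from the parameter $n$ of $F_n$ throughout.
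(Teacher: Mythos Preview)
Your proof is correct and follows essentially the same approach as the paper: both directions use the singleton contributions to obtain the degree sequence $(2n,2,2,\ldots,2)$ on $2n+1$ vertices, then reconstruct by observing that the degree-$2n$ vertex is universal and the remaining vertices form a perfect matching. Your write-up is slightly more explicit in phrasing the reconstruction as ``$H-v_0$ is $1$-regular, hence $nK_2$,'' but the argument is the same.
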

\begin{proof}
	First, we show that a graph $G$ which is isomorphic to a friendship graph $F_{n}$, has the above properties in the proposition. Let $G$ be of the form in Figure \ref{figure: defensive allaince the friendship}.
	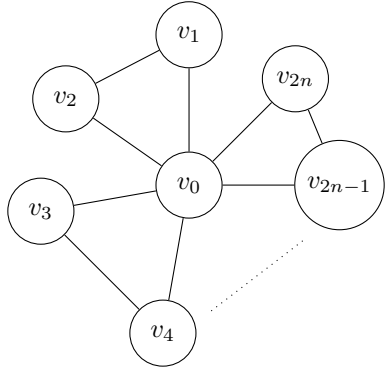
\begin{figure}[ht!]
		\centering
		\begin{tikzpicture}[baseline=(current bounding box.north)]
		\tikzset{vertex/.style = {shape=circle,draw,minimum size=2.5em}}
		
		\node[vertex] (v0) at  (0,0) {$v_{0}$};
		\node[vertex] (v1) at (0,2) {$v_{1}$};
		\node[vertex] (v2) at (145:2) {$v_{2}$};
		\node[vertex] (v3) at  (190:2) {$v_{3}$};
		\node[vertex] (v4) at  (260:2) {$v_{4}$};
		\node[vertex] (vn1) at  (0:2) {$v_{2n-1}$};
		\node[vertex] (vn) at  (45:2) {$v_{2n}$};

		\draw (v0)  to (v1);
		\draw (v0) to (v2);
		\draw (v0) to (v3);
		\draw (v0) to (v4);
		\draw (v0) to (vn);
		\draw (v0) to (vn1);
		
		\draw (v1) to (v2)  (v3) to (v4) (vn1) to (vn);
		
		\draw[dotted] (280:1.7) -- (-25:1.7);
		\end{tikzpicture}
		\caption{A friendship graph}
		\label{figure: defensive allaince the friendship}
	\end{figure}
	
	The subsets of $V(G)$ with cardinality one which induce connected subgraphs in $G$, can be partitioned into the following parts: The part $\{ \{ v_{0} \} \}$ in which $\{ v_{0} \}$ contributes the term $xy$. The part $\{ \{ v_{1} \},\{ v_{2} \}, \cdots , \{ v_{2n} \} \}$ in which every set contributes the term $xy^{2n-1}$ and by summing we get the term $2nxy^{2n-1}$.
	
	Now we prove the converse. Let $n$ be a positive integer, and $H$ is a graph with
	\begin{align*}
		[x]da(H;x,y) =& 2ny^{2n-1} +y \text{.}
	\end{align*}
	By Proposition \ref{Proposition: Order}, the order of $H$ equals $2n+1$. By Proposition \ref{Proposition: Degree sequence}, the degree sequence of $H$ is $(2n,2,2, \cdots ,2)$. We construct the graph by first connecting by an edge the vertex with degree $2n$ to every other vertex. Second every other vertex choose any arbitrary vertex not the one with degree $2n$ and connect it with an edge to complete its degree. Hence the constructed graph $H$ is isomorphic to the friendship graph $ F_{n}$.	
\end{proof}

\bigskip

\subsection{The triangular book graph}\label{Subsection: triangular graph}
\begin{definition}
	Let $n$ be a positive integer. The \index{triangular book graph}\emph{triangular book graph} denoted by $B_{n}$ is defined by the graph join $nK_{1}+K_{2}$.
\end{definition}
\begin{proposition}
	A simple graph $G$ is isomorphic to the triangular book graph $B_{n}$
	if and only if 
	\begin{align*}
		[x]da(G;x,y) =& 2y + ny^{n} \text{, where $n$ is a positive integer.}
	\end{align*}
	
\end{proposition}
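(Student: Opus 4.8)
The statement has the familiar two-directional shape of the earlier characterizations in this section, so I would follow the same template: compute the relevant slice of $da$ for $B_n$, and then recover $B_n$ from that slice via the order and degree-sequence propositions.

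For the forward direction, first I would fix the standard model $B_n = nK_1 + K_2$: write $K_2 = \{a,b\}$ with $a \sim b$, and let $v_1,\dots,v_n$ be the $n$ independent vertices, each joined to both $a$ and $b$. Then $|V(B_n)| = n+2$, the vertices $a,b$ have degree $n+1$, and each $v_i$ has degree $2$. Since for a singleton $S=\{w\}$ one has $f_x(\{w\}) = 1$ and $f_y(\{w\}) = |V(G)| - \deg(w)$, the sets $\{a\}$ and $\{b\}$ each contribute the monomial $xy^{(n+2)-(n+1)} = xy$, and each $\{v_i\}$ contributes $xy^{(n+2)-2} = xy^n$. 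Collecting the $x^1$-terms gives $[x]da(B_n;x,y) = 2y + ny^n$, which is the asserted identity; higher-cardinality sets do not affect $[x]da$.

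For the converse, suppose $H$ satisfies $[x]da(H;x,y) = 2y + ny^n$. Setting $y=1$ and applying Proposition~\ref{Proposition: Order} gives $|V(H)| = 2 + n = n+2$. Then Proposition~\ref{Proposition: Degree sequence} lets me read the degrees off the exponents of $y$: the monomial $2y$ forces exactly two vertices with $(n+2)-\deg = 1$, i.e.\ of degree $n+1$, and the monomial $ny^n$ forces exactly $n$ vertices with $(n+2)-\deg = n$, i.e.\ of degree $2$. So the degree sequence of $H$ is $(n+1,n+1,2,\dots,2)$. Now the reconstruction is forced: a vertex of degree $n+1$ in a graph on $n+2$ vertices is adjacent to every other vertex, so the two degree-$(n+1)$ vertices $a,b$ are adjacent to each other and to all $n$ remaining vertices (in particular $H$ is connected, so no separate appeal to Proposition~\ref{Proposition: Connected} is needed). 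Each remaining vertex already has $a$ and $b$ as neighbours, and since its degree is exactly $2$ it has no further neighbour; hence $\{v_1,\dots,v_n\}$ is an independent set each of whose members is joined to both $a$ and $b$. This is precisely $nK_1 + K_2$, so $H \cong B_n$.

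\textbf{Expected main obstacle.} I do not expect a substantial difficulty: once the degree sequence $(n+1,n+1,2,\dots,2)$ is extracted, the graph is uniquely determined by the universal-vertex argument above. The only points needing a line of care are the small/degenerate cases — for $n=1$, $B_1 = K_3$ and the two ``high-degree'' vertices are of the same type as the degree-$2$ vertices, but the argument still applies (every vertex of a $3$-vertex $2$-regular graph is universal, so $H = K_3$) — and the remark that, unlike several earlier items in this section, connectedness here comes for free from the presence of a universal vertex rather than from Proposition~\ref{Proposition: Connected}.
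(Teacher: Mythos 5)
Your proposal is correct and follows essentially the same route as the paper: compute the singleton contributions to get $[x]da(B_n;x,y)=2y+ny^n$, then in the converse read off the order $n+2$ and the degree sequence $(n+1,n+1,2,\dots,2)$ via Propositions \ref{Proposition: Order} and \ref{Proposition: Degree sequence} and reconstruct the graph by joining the two degree-$(n+1)$ vertices to everything. Your universal-vertex justification of why the reconstruction is forced is a slightly more explicit version of the paper's one-line construction step, and your remarks on the $n=1$ case and on connectedness being automatic are harmless additions.
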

\begin{proof}
	First, we show that a simple graph $G$ which is isomorphic to a triangular book graph $B_{n}$, has the above properties in the proposition. Let $G$ be of the form in Figure \ref{figure: defensive allaince the traingluar book}.
	\begin{figure}[ht!]
		\centering
		\begin{tikzpicture}[baseline=(current bounding box.north)]
		\tikzset{vertex/.style = {shape=circle,draw,minimum size=2.5em}}
		
		\node[vertex] (va) at (-2,-1) {$v_{a}$};
		\node[vertex] (vb) at (2,-1) {$v_{b}$};
		\node[vertex] (v1) at  (-1,1) {$v_{2}$};
		\node[vertex] (v2) at  (-3,1) {$v_{1}$};
		\node[vertex] (v3) at  (1,1) {$v_{3}$};
		\node[vertex] (vn) at  (3,1) {$v_{n}$};
		
		\draw (va)  to (v1);
		\draw (va) to (v2);
		\draw (va) to (v3);
		\draw (va) to (vn);
		\draw (vb) to (v1);
		\draw (vb) to (v2);
		\draw (vb) to (v3) ;
		\draw (vb) to (vn);
		
		\draw (va) to(vb) ;
		
		\draw[dotted] (1.7,1) -- (2.3,1);
		\end{tikzpicture}
		\caption{A triangular book graph}
		\label{figure: defensive allaince the traingluar book}
	\end{figure}
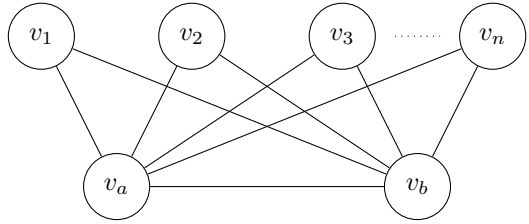
	
	The subsets of $V(G)$ with cardinality one which induce connected subgraphs in $G$, can be partitioned into the following parts: The part $\{ \{ v_{1} \},\{ v_{2} \}, \cdots , \{ v_{n} \} \}$ in which every set contributes the term $xy^{n}$ and by summing, we get the term $nxy^{n}$. The part $\{ \{ v_{a} \} ,\{ v_{b} \} \}$ in which every set contributes the term $xy$ and by summing, we get $2xy$.
	
	Now we prove the converse. Let $n$ be a positive integer, and $H$ is a graph with
	\begin{align*}
		[x]da(H;x,y) =& 2y + ny^{n} \text{.}
	\end{align*}
	
	By Proposition \ref{Proposition: Order}, the order of $H$ equals $n+2$. By Proposition \ref{Proposition: Degree sequence}, the degree sequence of $H$ is $(n+1,n+1,2,2, \cdots ,2)$.
	By connecting the two vertices with degree $n+1$ to every other vertex, $H$ is constructed which is isomorphic to the triangular book graph $B_{n}$.	
\end{proof}

\bigskip

\subsection{The quadrilateral book graph}\label{Subsection: quadrilateral book graph}
\begin{definition}
	Let $n$ be a positive integer. The \index{quadrilateral book graph}\emph{quadrilateral book graph} denoted by $B_{n,2}$ is defined by the graph join $nK_{2}+k_{2}$.
\end{definition}
\begin{proposition}
	A simple graph $G$ is isomorphic to the quadrilateral book graph $B_{n,2}$
	if and only if 
	\begin{align*}
		[x]da(G;x,y) =& 2y^{n+1} + 2ny^{2n} \text{ and}\\
		[x^{2}]da(G;x,y) =& ny^{2n+2} + (2n+1)y^{n+3} \text{ and} \\
		[x^{2n+1}]da(G;x,y) =& (2n+2)y^{2n+2} \text{ and} \\
		[x^{2n+2}]da(G;x,y) =& y^{2n+4} \text{, where $n$ is a positive integer.}
	\end{align*}
	
\end{proposition}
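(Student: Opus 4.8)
The plan is to prove both implications along the lines of the earlier book-type characterizations (the triangular book, the double star, the wheel). I will use the description of $B_{n,2}$ as the graph with two \emph{spine} vertices $a,b$ joined by an edge together with $n$ \emph{pages}, the $i$-th page being the path $a - p_{i} - q_{i} - b$; thus $\deg a=\deg b=n+1$, each $p_{i}$ and $q_{i}$ has degree $2$, the order is $2n+2$ and the size is $3n+1$.

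For the forward direction I would read off the four coefficients directly from the definition of $da$. The singletons split into $\{a\},\{b\}$, each with $\delta_{S}(v)-\delta_{\bar{S}}(v)=-(n+1)$ so $f_{y}=n+1$, and the $2n$ page vertices, each with $\delta_{S}(v)-\delta_{\bar{S}}(v)=-2$ so $f_{y}=2n$; this gives $[x]da=2y^{n+1}+2ny^{2n}$. The $2$-subsets inducing connected subgraphs are the edges: the $n$ edges $p_{i}q_{i}$ between two degree-$2$ vertices give $f_{y}=2n+2$, while the $2n+1$ edges meeting $\{a,b\}$ (the edge $ab$ and the $2n$ spine--page edges) each have an endpoint of degree $n+1$ and give $f_{y}=n+3$; this yields $[x^{2}]da=ny^{2n+2}+(2n+1)y^{n+3}$. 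On $V(G)$ we have $\delta_{\bar{S}}\equiv0$, so $f_{y}=\min_{v}\deg(v)+(2n+2)=2n+4$, giving $[x^{2n+2}]da=y^{2n+4}$. Finally, since $B_{n,2}$ has no cut vertex, each of the $2n+2$ sets $V(G)\setminus\{v\}$ induces a connected subgraph, and checking the two types of $v$ (a spine vertex, a page vertex) shows that in each such set some vertex has $\delta_{S}-\delta_{\bar{S}}=0$, so $f_{y}=2n+2$ and $[x^{2n+1}]da=(2n+2)y^{2n+2}$.

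For the converse, from a graph $H$ satisfying the four equations I would first extract the standard invariants: Proposition~\ref{Proposition: Order} gives order $2n+2$; Proposition~\ref{Proposition: Connected}, applied to the nonzero $[x^{2n+2}]$ term, gives connectedness; Proposition~\ref{Proposition: Degree sequence}, applied to $[x]da$, gives exactly two vertices $a,b$ of degree $n+1$ and $2n$ vertices of degree $2$ (hence size $3n+1$); and Proposition~\ref{Proposition: number of cut vertices}, applied to the coefficient $2n+2$ of $[x^{2n+1}]da$, gives that $H$ has no cut vertex. Next I would use $[x^{2}]da$ structurally: its term $ny^{2n+2}$ says exactly $n$ edges join two degree-$2$ vertices, and its term $(2n+1)y^{n+3}$ says exactly $2n+1$ edges meet $\{a,b\}$; since that number equals $\deg a+\deg b-[a\sim b]=2n+2-[a\sim b]$, it forces $a\sim b$. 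Then I would examine $H-\{a,b\}$: it has maximum degree at most $2$, so it is a disjoint union of paths and cycles; a cycle component would be a whole component of $H$ (no vertex of it meets $\{a,b\}$), contradicting connectedness, so $H-\{a,b\}$ is a union of paths, and counting its $2n$ vertices against its $n$ edges gives exactly $n$ path components, of orders $m_{1},\dots,m_{n}\geq1$ with $m_{1}+\dots+m_{n}=2n$.

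The crux, which I expect to be the main obstacle, is to show every $m_{i}=2$; this is where the full force of the $[x^{2n+1}]$ coefficient is needed, namely that its $y$-exponent is exactly $2n+2$ and not larger. If some $m_{i}=1$, the corresponding page vertex $w$ has both neighbours in $\{a,b\}$, so $w\sim a$ and $w\sim b$; then $V(H)\setminus\{w\}$ still induces a connected subgraph (no cut vertex), and in it $\delta_{S}(a)-\delta_{\bar{S}}(a)=\delta_{S}(b)-\delta_{\bar{S}}(b)=n-1$ while every remaining degree-$2$ vertex has $\delta_{S}-\delta_{\bar{S}}=2$, so this set contributes $x^{2n+1}y^{\,2n+2+\min\{n-1,2\}}$, whose $y$-exponent exceeds $2n+2$ whenever $n\geq2$, contradicting $[x^{2n+1}]da(H;x,y)=(2n+2)y^{2n+2}$. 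The case $n=1$ is handled separately: the degree sequence $(2,2,2,2)$ has only the connected realization $C_{4}=B_{1,2}$. Hence for $n\geq2$ no $m_{i}$ equals $1$, and since $m_{1}+\dots+m_{n}=2n$ with each $m_{i}\geq2$, all $m_{i}=2$. A final cut-vertex argument shows the two vertices of a page $p_{i}q_{i}$ cannot both be adjacent to the same spine vertex (that vertex would be a cut vertex), so one is joined to $a$ and the other to $b$; adding the edge $ab$ this recovers exactly the edge set of $B_{n,2}$, so $H\cong B_{n,2}$.
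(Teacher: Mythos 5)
Your proof is correct, and the forward direction is the same term-by-term computation the paper performs. The converse, however, takes a genuinely different route in its reconstruction step, and it is the stronger of the two. The paper deduces the degree sequence, counts $n$ edges between degree-$2$ vertices and $2n+1$ edges meeting the two hubs, concludes $v_a\sim v_b$, and then passes directly to a figure in which the $2n$ degree-$2$ vertices are already paired into $n$ \emph{disjoint} edges; it never justifies that these $n$ edges form a perfect matching, i.e.\ it never excludes a degree-$2$ vertex adjacent to both hubs (equivalently, a path of three or more degree-$2$ vertices in $H-\{v_a,v_b\}$), and such configurations do satisfy the $[x]$ and $[x^2]$ constraints. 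Your decomposition of $H-\{a,b\}$ into a forest of exactly $n$ paths on $2n$ vertices reduces the whole problem to excluding a one-vertex page, and you rule that out by showing that deleting such a vertex $w$ yields a connected $(2n+1)$-set with $y$-exponent $2n+2+\min\{n-1,2\}>2n+2$ for $n\ge 2$, contradicting $[x^{2n+1}]da=(2n+2)y^{2n+2}$. The paper uses only the coefficient sum of $[x^{2n+1}]da$ (to count cut vertices); you also exploit its exponent, which is precisely what is needed to close the gap, and your separate treatment of $n=1$ via the degree sequence $(2,2,2,2)$ covers a case the inequality does not. The final step, assigning the two endpoints of each page to distinct hubs via the no-cut-vertex argument, coincides with the paper's. (One further point in your favour: the paper's definition of $B_{n,2}$ as the join $nK_2+K_2$ is inconsistent with its own figure and with the stated polynomial; your spine-and-pages description is the graph the polynomial actually encodes.)
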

\begin{proof}
	First, we show that a simple graph $G$ which is isomorphic to a quadrilateral book graph $B_{n,2}$, has the above properties in the proposition. Let $G$ be of the form in Figure \ref{figure: defensive allaince the quadrilateral book}.
	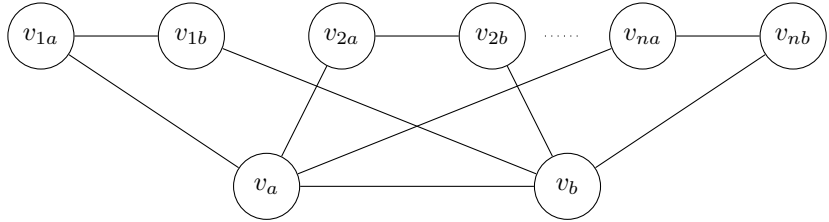
\begin{figure}[ht!]
		\centering
		\begin{tikzpicture}[baseline=(current bounding box.north)]
		\tikzset{vertex/.style = {shape=circle,draw,minimum size=2.5em}}
		
		\node[vertex] (va) at (-2,-1) {$v_{a}$};
		\node[vertex] (vb) at (2,-1) {$v_{b}$};
		\node[vertex] (v1) at  (-5,1) {$v_{1a}$};
		\node[vertex] (v2) at  (-3,1) {$v_{1b}$};
		\node[vertex] (v3) at  (-1,1) {$v_{2a}$};
		\node[vertex] (v4) at  (1,1) {$v_{2b}$};
		\node[vertex] (vn1) at  (3,1) {$v_{na}$};
		\node[vertex] (vn) at  (5,1) {$v_{nb}$};

		\draw (va)  to (v1);
		\draw (va) to (v3);
		\draw (va) to (vn1);
		\draw (vb) to (v2);
		\draw (vb) to (v4);
		\draw (vb) to (vn);
		
		\draw (v1) to (v2);
		\draw (v3) to (v4);
		\draw (vn1) to (vn);
		
		\draw (va) to(vb) ;
		
		\draw[dotted] (1.7,1) -- (2.2,1);
		\end{tikzpicture}
		\caption{A quadrilateral book graph}
		\label{figure: defensive allaince the quadrilateral book}
	\end{figure}
	
	The subsets of $V(G)$ with cardinality one which induce connected subgraphs in $G$, can be partitioned into the following parts: The part $\{ \{ v_{1a} \},\{ v_{1b} \},\{ v_{2a} \},\{ v_{2b} \},\\ \cdots , \{ v_{na} \},\{ v_{nb} \} \}$ in which every set contributes the term $xy^{2n}$ and by summing, we get the term $2nxy^{2n}$. The part $\{ \{ v_{a} \} ,\{ v_{b} \} \}$ in which every set contributes the term $xy^{n+1}$ and by summing, we get $2xy^{n+1}$.
	
	The subsets of $V(G)$ with cardinality two which induce connected subgraphs in $G$, can be partitioned into the following parts: The part $\{ \{ v_{1a} , v_{1b} \},\{ v_{2a} , v_{2b} \}, \cdots , \\ \{ v_{na} , v_{nb} \} \}$ in which every set contributes the term $x^{2}y^{2n+2}$ and by summing, we get the term $nx^{2}y^{2n+2}$. The part 
	$
	\{ \{ v_{a} , v_{b} \} , \{ v_{a} , v_{1a} \} , \{ v_{a} , v_{2a} \} , \cdots ,  \{ v_{a} , v_{na} \}
	, \{ v_{b} , v_{1b} \} , \{ v_{b} , v_{2b} \},\\ \cdots ,  \{ v_{b} , v_{nb} \}\}
	$
	in which every set contributes the term $x^{2}y^{n+3}$ and by summing, we get $(2n+1)x^{2}y^{n+3}$,
	
	The set $V(G)$ contributes the term $x^{2n+2}y^{2n+4}$. And if we delete any vertex from $V(G)$, we get a set which contributes the term $x^{2n+1}y^{2n+2}$ and by summing, we get $(2n+2)x^{2n+1}y^{2n+2}$.
	
	Now we prove the converse. Let $n$ be a positive integer, and $H$ is a graph with
	\begin{align*}
		[x]da(H;x,y) =& 2y^{n+1} + 2ny^{2n} \text{ and}\\
		[x^{2}]da(H;x,y) =& ny^{2n+2} + (2n+1)y^{n+3} \text{ and} \\
		[x^{2n+1}]da(H;x,y) =& (2n+2)y^{2n+2} \text{ and} \\
		[x^{2n+2}]da(H;x,y) =& y^{2n+4} \text{ .} 
	\end{align*}
	
	By Proposition \ref{Proposition: Order}, the order of $H$ equals $2n+2$. By Proposition \ref{Proposition: Size}, the size of $H$ equals $3n+1$. By Proposition \ref{Proposition: Connected}, $H$ is connected. By Proposition \ref{Proposition: Degree sequence}, the degree sequence of $H$ is $(n+1,n+1,2,2, \cdots ,2)$. By Proposition \ref{Proposition: number of cut vertices}, the number of cut vertices is zero. Let the two vertices with degree $n+1$ be $v_{a}$ and $v_{b}$ respectively. A subset of cardinality two which induces a connected subgraph in $G$, and contains two vertices of degree two is the only subset of cardinality two which contributes a term $[x^{2}y^{2n+2}]$. Then, the number of edges connecting two vertices of degree two is $[x^{2}y^{2n+2}]da(G;x,y)$ and equals $n$. The number of the rest edges is $2n+1$. But the number of edges which are incident to vertices of degree two are necessary only $2n$. Hence, the last edge is necessarily between the two vertices of degree $n+1$. At this point we have a graph like the one in Figure \ref{figure: defensive allaince the quadrilateral book constructing}
	\begin{figure}[ht!]
		\centering
		\begin{tikzpicture}[baseline=(current bounding box.north)]
		\tikzset{vertex/.style = {shape=circle,draw,minimum size=2.5em}}
		
		\node[vertex] (va) at (-2,-1) {$n+1$};
		\node[vertex] (vb) at (2,-1) {$n+1$};
		\node[vertex] (v1) at  (-5,1) {$2$};
		\node[vertex] (v2) at  (-3,1) {$2$};
		\node[vertex] (v3) at  (-1,1) {$2$};
		\node[vertex] (v4) at  (1,1) {$2$};
		\node[vertex] (vn1) at  (3,1) {$2$};
		\node[vertex] (vn) at  (5,1) {$2$};

		\draw (v1) to (v2);
		\draw (v3) to (v4);
		\draw (vn1) to (vn);
		
		\draw (va) to(vb) ;
		
		\draw[dotted] (1.7,1) -- (2.2,1);
		\end{tikzpicture}
		\caption{A quadrilateral book graph}
		\label{figure: defensive allaince the quadrilateral book constructing}
	\end{figure}
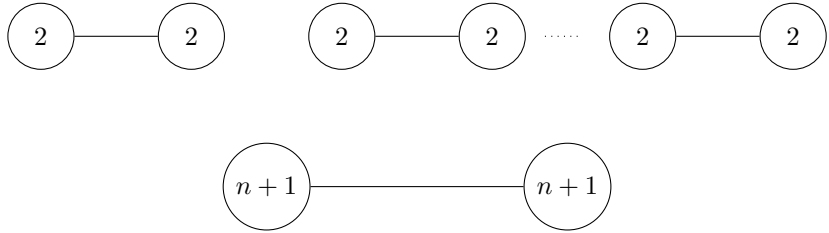
	where the number in the vertices is its degree. The two vertices of degree $n+1$ need to be connected to $n$ vertices of degree two. But a vertex with degree $n+1$ will never be connected to two adjacent vertices of degree two, since this will make this vertex of degree $n+1$ a cut vertex which contradicts the statement that $H$ has no cut vertices. This means that every vertex of degree $n+1$ will be connected to only non-adjacent vertices of degree two, which yields the quadrilateral book graph $H$.
\end{proof}

\section{Attaching a vertex to a complete graph}\label{Section: Attaching a vertex to a complete graph}
\begin{proposition}
	Let $v_{0}$ be a vertex and $n$ a positive integer. Let $H$ be a simple graph formed from $K_{n} \cup \{v_{0}\}$ by joining some vertices to $v_{0}$. Let $V(H) \setminus \{v_{0}\}= R \cup S$ where $R=\{r_{1},r_{2}, \cdots , r_{r}\}$, $r= |R|$ where $R$ is the set of vertices in $H$ which are adjacent to $v_{0}$ and $S=\{s_{1},s_{2}, \cdots , s_{s}\}$, $s= |S|$ where $S$ is the set of vertices in $H$ which are not adjacent to $v_{0}$. Let $G$ be a simple graph. $G$ is isomorphic to $H$ if and only if  
	\begin{align*}
		da(G;x,y)= &(1+xy^{2})da(K_{r};x,y) + y \; da(K_{s};x,y) + y \; da(K_{r};x,y)da(K_{s};x,y)\\
		&+ xy^{n+1-r} + xy \; da(K_{r};x,y)\sum_{j=1}^{s}\binom{s}{j}x^{j}y^{\min\{2j,s+1\}}  \text{ .}
	\end{align*}
	
\end{proposition}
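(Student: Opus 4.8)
The plan is to prove both directions: first by a direct expansion of $da(H;x,y)$, then by reconstructing $G$ from its order and degree sequence. For the forward direction, write $V(H)=\{v_0\}\cup R\cup S$ with $|R|=r$, $|S|=s$ and $r+s=n$, so that $H$ has order $n+1$, $H[R\cup S]=K_n$, and $v_0$ is adjacent exactly to the vertices of $R$; consequently $\deg_H v_0=r$, $\deg_H u=n$ for $u\in R$, and $\deg_H u=n-1$ for $u\in S$. I would enumerate the non-empty $T\subseteq V(H)$ with $G[T]$ connected by cases on whether $v_0\in T$ and on $i:=|T\cap R|$ and $j:=|T\cap S|$: if $v_0\notin T$ then every non-empty $T\subseteq R\cup S$ is connected, since $K_n$ is complete; if $v_0\in T$ then $G[T]$ is connected precisely when $T=\{v_0\}$ or $i\ge 1$, since otherwise $v_0$ is isolated in $G[T]$.

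The heart of the computation is the value $c_u(T):=\delta_T(u)-\delta_{\bar T}(u)+(n+1)$ for the three vertex types. A short count gives, when $v_0\notin T$, $c_u(T)=2|T|-1$ for $u\in R$ and $c_u(T)=2|T|$ for $u\in S$; and when $v_0\in T$, $c_{v_0}(T)=2i+s+1$, $c_u(T)=2(i+j)+1$ for $u\in R\cap T$, and $c_u(T)=2(i+j)$ for $u\in S\cap T$. Taking $f_y(T)=\min_{u\in T}c_u(T)$, weighting each class by $\binom{r}{i}\binom{s}{j}$, and using $da(K_m;x,y)=\sum_{\ell=1}^m\binom{m}{\ell}x^{\ell}y^{2\ell-1}$, the six families $T\subseteq R$; $T\subseteq S$; $T$ meeting both $R$ and $S$ with $v_0\notin T$; $T=\{v_0\}$; $v_0\in T$ with $j=0$; and $v_0\in T$ with $j\ge 1$ contribute, respectively, $da(K_r;x,y)$, $y\,da(K_s;x,y)$, $y\,da(K_r;x,y)\,da(K_s;x,y)$, $xy^{n+1-r}$, $xy^2\,da(K_r;x,y)$, and $xy\,da(K_r;x,y)\sum_{j=1}^s\binom{s}{j}x^jy^{\min\{2j,\,s+1\}}$; summing these yields the stated polynomial. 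The only subtle point is the last family, where $f_y(T)=\min\{2i+s+1,\,2(i+j)+1,\,2(i+j)\}=2i+\min\{2j,\,s+1\}$, which is exactly the source of the inner minimum in the statement; the rest is recognizing the single-variable generating sums as copies of $da(K_r;x,y)$ and $da(K_s;x,y)$ up to the monomial factors $1$, $y$, $xy$, $xy^2$.

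For the converse, suppose $G$ has the stated polynomial. By Proposition~\ref{Proposition: Order} its order is $n+1$, and extracting $[x]da(G;x,y)=ry+sy^2+y^{n+1-r}$ and applying Proposition~\ref{Proposition: Degree sequence}, $G$ has $r$ vertices of degree $n$, $s$ vertices of degree $n-1$, and one vertex of degree $r$ (with counts merged appropriately in the degenerate cases $r=n$ and $r=n-1$). Since $G$ has order $n+1$, each degree-$n$ vertex is universal; let $R$ be this set, so $G[R]=K_r$ and every vertex of $R$ is adjacent to all other $s+1$ vertices. The degree-$r$ vertex $v_0$ is then adjacent to all of $R$ and, having no room for more neighbours, to nothing else. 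Each of the $s$ vertices of degree $n-1$ (the set $S$) is adjacent to all of $R$ but not to $v_0$, hence needs exactly $s-1$ further neighbours, which must lie in $S$; thus $G[S]=K_s$. Therefore $G[R\cup S]=K_n$ and $v_0$ is joined precisely to $R$, so $G\cong H$; in the degenerate cases $r=n$ forces $s=0$ and $G=K_{n+1}=H$, while $r=n-1$ forces the two degree-$(n-1)$ vertices to be mutually non-adjacent, again giving $H$. The main obstacle I anticipate is the bookkeeping in the expansion of $da(H;x,y)$, especially disentangling the three candidate minimizers in the mixed case $v_0\in T$, $i,j\ge 1$; the converse is comparatively routine, its only delicate feature being the coincidences in the degree sequence when $r\in\{n-1,n\}$.
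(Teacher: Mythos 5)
Your proof is correct and, for the forward direction, follows essentially the same route as the paper: the same six-part partition of the connected induced subgraphs ($T\subseteq R$; $T\subseteq S$; mixed $T$ without $v_0$; $T=\{v_0\}$; $v_0$ plus $R$-vertices only; $v_0$ plus both), with the same computation of $f_y$ in each case and the same recognition of the binomial sums as $da(K_r;x,y)$ and $da(K_s;x,y)$ times monomial factors; your identification of the minimum $2i+\min\{2j,s+1\}$ in the last family matches the paper exactly. In the converse your reconstruction step is genuinely different and cleaner: the paper, after extracting the order and degree sequence, argues from the absence of a term of the form $ax^{2}y^{(r+s+1)+1-(r-1)}$ that no degree-$(n-1)$ vertex is adjacent to the degree-$r$ vertex, whereas you simply observe that the $r$ vertices of degree $n$ in a graph of order $n+1$ are universal, which already forces $v_0$'s neighbourhood to be exactly $R$ and forces $S$ to induce $K_s$ by degree counting. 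Your version needs no further coefficient extraction beyond $[x]da(G;x,y)$ and, unlike the paper, explicitly disposes of the degenerate cases $r\in\{n-1,n\}$ where the degree classes collide, so it is the more complete of the two arguments.
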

\begin{proof}
	The subsets of $V(G)$ with cardinality one which induce connected subgraphs in $G$, can be partitioned into the following parts: 
	The part $\{ \{ v_{0}\} \}$ in which $\{ v_{0}\}$ contributes the term $xy^{n+1-r}$. \\
	The part containing the sets of cardinality $i$ in the range of $1 \leq i \leq r$ formed only from vertices in $R$ in which each set contributes the term $x^{i}y^{2i -1}$ and by summing, we get:
	\begin{align*}
		&\binom{r}{1}x^{1}y^{1} +\binom{r}{2}x^{2}y^{3} + \cdots + \binom{r}{r}x^{r}y^{2r-1} \\
		&=\sum_{i=1}^{r} \binom{r}{i}x^{i}y^{2i-1}  \\
		&=\frac{1}{y} \left( \sum_{i=0}^{r} \binom{r}{i}\left(xy^{2}\right)^{i} - 1 \right) \\
		&= \frac{\left(1+xy^{2}\right)^{r}-1}{y} \\
		&= da(K_{r};x,y)  \text{.}
	\end{align*}
	
	The part containing the sets of cardinality $i$ in the range of $1 \leq i \leq s$ arises only from the vertices in $S$ in which each set contributes the term $x^{i}y^{2i}$ and by summing, we get:
	\begin{align*}
		&\binom{s}{1}x^{1}y^{2} +\binom{s}{2}x^{2}y^{4} + \cdots + \binom{s}{s}x^{s}y^{2s} \\
		&=\sum_{i=1}^{s} \binom{s}{i}x^{i}y^{2i}  \\
		&=y \; \frac{1}{y} \left( \sum_{i=0}^{s} \binom{s}{i}\left(xy^{2}\right)^{i} - 1 \right) \\
		&= y \; \frac{\left(1+xy^{2}\right)^{s}-1}{y} \\
		&= y \; da(K_{s};x,y)  \text{.}
	\end{align*}
	
	The part containing the sets of cardinality $i$ in the range of $2 \leq i \leq r+1$ results from $\{ v_{0} \}$ and the vertices in $R$ in which each set contributes the term $x^{i+1}y^{2i+1}$ and by summing, we get:
	\begin{align*}
		&\binom{r}{1}x^{2}y^{3} +\binom{r}{2}x^{3}y^{5} + \cdots + \binom{r}{r}x^{r+1}y^{2r+1} \\
		&=\sum_{i=1}^{r} \binom{r}{i}x^{i+1}y^{2i+1}  \\
		&=xy^{2}\frac{1}{y} \left( \sum_{i=0}^{r} \binom{r}{i}\left(xy^{2}\right)^{i} - 1 \right) \\
		&= xy^{2}\frac{\left(1+xy^{2}\right)^{r}-1}{y} \\
		&= xy^{2}da(K_{r};x,y)  \text{.}
	\end{align*}
	
	The part containing the sets formed from subsets of $R$ of cardinality $i$ in the range of $1 \leq i \leq r$ and subsets of $S$ of cardinality $j$ in the range of $1 \leq j \leq s$ in which each set contributes the term $x^{i+j}y^{(r+s+1)+(i+j-1)-(r+s+1-i-j)}$ and by summing, we get:
	\begin{align*}
		&y\binom{r}{1}x^{1}y^{1} \binom{s}{1}x^{1}y^{1} +y\binom{r}{1}x^{1}y^{1} \binom{s}{2}x^{2}y^{3} + \cdots + y\binom{r}{1}x^{1}y^{1} \binom{s}{3}x^{3}y^{5}\\
		& +y\binom{r}{2}x^{2}y^{3} \binom{s}{1}x^{1}y^{1} +y\binom{r}{2}x^{2}y^{3} \binom{s}{2}x^{2}y^{3} + \cdots + y\binom{r}{2}x^{2}y^{3} \binom{s}{3}x^{3}y^{5}\\ 
		& \vdots\\
		&      +y\binom{r}{r}x^{r}y^{2r-1} \binom{s}{1}x^{1}y^{1} +y\binom{r}{r}x^{r}y^{2r-1} \binom{s}{2}x^{2}y^{3} + \cdots \\
		&+ y\binom{r}{r}x^{r}y^{2r-1} \binom{s}{s}x^{s}y^{2s-1}\\
		&=y\sum_{i=1}^{r} \binom{r}{i}x^{i}y^{2i-1} \sum_{j=1}^{s} \binom{s}{j}x^{j}y^{2j-1}  \\
		&=y \; \frac{1}{y} \left( \sum_{i=0}^{r} \binom{r}{i}\left(xy^{2}\right)^{i} - 1 \right) \; \frac{1}{y} \left( \sum_{j=0}^{s} \binom{s}{j}\left(xy^{2}\right)^{j} - 1 \right) \\
		&= y \; \frac{\left(1+xy^{2}\right)^{r}-1}{y} \; \frac{\left(1+xy^{2}\right)^{s}-1}{y}\\
		&= y \; da(K_{r};x,y) \; da(K_{s};x,y) \text{.}
	\end{align*}
	
	The part containing the sets formed from $v_0$ and subsets of $R$ of cardinality $i$ in the range of $1 \leq i \leq r$ and subsets of $S$ of cardinality $j$ in the range of $1 \leq j \leq s$ in which each set contributes the term $x^{i+j+1}y^{2i+min\{2j,s+1\}}$ and by summing, we get:
	\begin{align*}
		&xy\binom{r}{1}x^{1}y^{1} \binom{s}{1}x^{1}y^{\min\{2,s+1\}} +xy\binom{r}{1}x^{1}y^{1} \binom{s}{2}x^{2}y^{\min\{4,s+1\}} + \cdots \\
		&+ xy\binom{r}{1}x^{1}y^{1} \binom{s}{s}x^{s}y^{\min\{2s,s+1\}}\\
		&+ xy\binom{r}{2}x^{2}y^{3} \binom{s}{1}x^{1}y^{\min\{2,s+1\}} +xy\binom{r}{2}x^{2}y^{3} \binom{s}{2}x^{2}y^{\min\{4,s+1\}} + \cdots \\
		&+ xy\binom{r}{2}x^{2}y^{3} \binom{s}{s}x^{s}y^{\min\{2s,s+1\}}\\ 
		& \vdots\\
		&+ xy\binom{r}{r}x^{r}y^{2r-1} \binom{s}{1}x^{1}y^{\min\{2,s+1\}} + xy\binom{r}{r}x^{r}y^{2r-1} \binom{s}{2}x^{2}y^{\min\{4,s+1\}} + \cdots \\
		&+ xy\binom{r}{r}x^{r}y^{2r-1} \binom{s}{s}x^{s}y^{\min\{2s,s+1\}}\\
		&=xy\sum_{i=1}^{r} \binom{r}{i}x^{i}y^{2i-1} \sum_{j=1}^{s} \binom{s}{j}x^{j}y^{\min\{2j,s+1\}}  \\
		&=xy \; \frac{1}{y} \left( \sum_{i=0}^{r} \binom{r}{i}\left(xy^{2}\right)^{i} - 1 \right) \; \sum_{j=1}^{s} \binom{s}{j}x^{j}y^{\min\{2j,s+1\}} \\
		&= xy \; \frac{\left(1+xy^{2}\right)^{r}-1}{y} \; \sum_{j=1}^{s} \binom{s}{j}x^{j}y^{\min\{2j,s+1\}}\\
		&= xy\; da(K_{r};x,y)\; \sum_{j=1}^{s}\binom{s}{j}x^{j}y^{\min\{2j,s+1\}} \text{.}
	\end{align*}

	Now we prove the converse. Let $r$ and $s$ be integers and $H$ is a graph with the defensive alliance polynomial, 
	\begin{align*}
		da(H;x,y)=(1+xy^{2})da(K_{r};x,y) + y \; da(K_{s};x,y) + y \; da(K_{r};x,y)da(K_{s};x,y) \\
		+xy^{n+1-r} + xy \; da(K_{r};x,y) \; \sum_{j=1}^{s}\binom{s}{j}x^{j}y^{\min\{2j,s+1\}} \text{.}
	\end{align*}
	By Proposition \ref{Proposition: Order}, the order of $H$ equals $r+s+1$. Let $r+s=n$.
	By Proposition \ref{Proposition: Degree sequence}, the degree sequence of $H$ consist of $n$ $r$ times then ($n-1$) $s$ times then r one time: $(n,n, \cdots ,n,n-1,n-1 \cdots, n-1,r)$.
	By constructing first all the vertices with degree $n$.	Note that no term left of the form $ax^{2}y^{(r+s+1)+1-(r-1)}$, hence no vertex with degree $n-1$ is connected to the vertex of degree $r$. Hence we choose arbitrary $s$ vertices and connect them to each other.
\end{proof}

\section{The distinctive power of the defensive alliance polynomial} \label{Section: Distinctive power of $DA$}
The authors in \cite{Carb14}, showed how the alliance polynomial can characterize some classes of graphs which were not characterized by other well-known graph polynomials like the tutte polynomial, the domination polynomial, the independence polynomial, the matching polynomial, the bivariate polynomial, and the subgraph component polynomial.

As a generalization for the alliance polynomial, the defensive alliance polynomial has at least the same power. In this section, we present two pairs of graphs that cannot be characterized by the alliance polynomial but can be characterized by the defensive alliance polynomial.\\

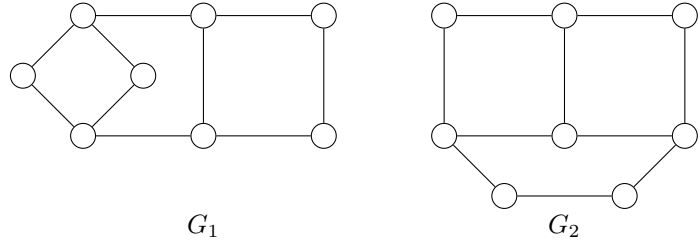
\begin{figure}[h]
	\centering
	\begin{tikzpicture}
	\begin{scope}[{scale=0.8}]
	\tikzset{vertex/.style = {shape=circle}}
	
	\node[vertex,draw] (v1) at  (0,1) 	{};
	\node[vertex,draw] (v2) at (0,-1) 	{};
	\node[vertex,draw] (v3) at (2,1) 	{};
	\node[vertex,draw] (v4) at  (2,-1) 	{};
	\node[vertex,draw] (v5) at  (-2,1) 	{};
	\node[vertex,draw] (v6) at  (-2,-1) 	{};
	\node[vertex,draw] (v7) at  (-1,0) 	{};
	\node[vertex,draw] (v8) at  (-3,0) 	{};
	\node[vertex] 	(t) at (0,-2.5) {$G_{1}$};
	
	\draw (v1) -- (v2) -- (v4) -- (v3) -- (v1) --(v5) --(v7) --(v6) --(v8) --(v5) 
	(v6) --(v2);

	\begin{scope}[xshift=6cm] 	
	\node[vertex,draw] (v1) at  (0,1) 	{};
	\node[vertex,draw] (v2) at (0,-1) 	{};
	\node[vertex,draw] (v3) at (2,1) 	{};
	\node[vertex,draw] (v4) at  (2,-1) 	{};
	\node[vertex,draw] (v5) at  (-2,1) 	{};
	\node[vertex,draw] (v6) at  (-2,-1) 	{};
	\node[vertex,draw] (v7) at  (-1,-2) 	{};
	\node[vertex,draw] (v8) at  (1,-2) 	{};
	\node[vertex] 	(t) at (0,-2.5) {$G_{2}$};
	
	\draw (v1) -- (v2) -- (v4) -- (v3) -- (v1) --(v5) --(v6) --(v2)  
	(v6) --(v7) --(v8)--(v4);
	\end{scope}	
	\end{scope}
	\end{tikzpicture}
	\caption{First pair of graphs}
	\label{figure: defensive allaince distincitve power}
\end{figure}
The alliance polynomial of the two graphs in the Figure \ref{figure: defensive allaince distincitve power} is:
\[
A(G_{1};x) = A(G_{2};x) = x^{10} + 7x^{9} + 37x^{8} + 63x^{7} + 4x^{6} + 4x^{5} \text{.}
\]
The defensive alliance polynomial of $G_{1}$:
\begin{align*}
da(G_{1};x,y) =& x^{8}y^{10} + 2x^{7}y^{9} + 6x^{7}y^{8} + x^{6}y^{9} + 14x^{6}y^{8} + 7x^{6}y^{7}\\
& + 2x^{5}y^{9} + 10x^{5}y^{8} + 16x^{5}y^{7} + 2x^{4}y^{9} + 4x^{4}y^{8}+ 17x^{4}y^{7}\\
& + 2x^{3}y^{8} + 14x^{3}y^{7} + x^{2}y^{8} + 9x^{2}y^{7} + 4xy^{6} + 4xy^{5}\text{.}
\end{align*}
The defensive alliance polynomial of $G_{2}$:
\begin{align*}
da(G_{2};x,y) =& x^{8}y^{10} + 3x^{7}y^{9} + 5x^{7}y^{8} + x^{6}y^{9} + 15x^{6}y^{8} + 7x^{6}y^{7} \\
&+ x^{5}y^{9} + 11x^{5}y^{8}+ 15x^{5}y^{7} + 2x^{4}y^{9} + 2x^{4}y^{8} + 19x^{4}y^{7}\\
& + 3x^{3}y^{8} + 13x^{3}y^{7} + x^{2}y^{8} + 9x^{2}y^{7} + 4xy^{6} + 4xy^{5}\text{.}
\end{align*}
Another pair of graphs:\\
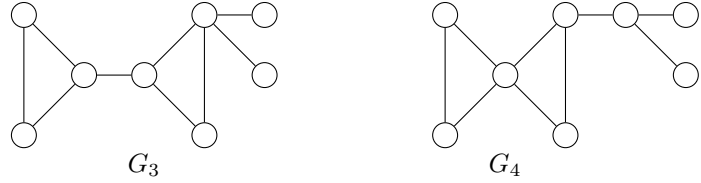
\begin{figure}[h]
	\centering
	\begin{tikzpicture}
	\begin{scope}[{scale=0.8}]
	\tikzset{vertex/.style = {shape=circle}}
	
	\node[vertex,draw] (v1) at  (0,0) 	{};
	\node[vertex,draw] (v2) at (1,1) 	{};
	\node[vertex,draw] (v3) at (1,-1) 	{};
	\node[vertex,draw] (v5) at  (2,1) 	{};
	\node[vertex,draw] (v6) at  (2,0) 	{};
	\node[vertex,draw] (v7) at  (-1,0) 	{};
	\node[vertex,draw] (v8) at  (-2,1) 	{};
	\node[vertex,draw] (v9) at  (-2,-1) 	{};
	\node[vertex] 	(t) at (0,-1.5) {$G_{3}$};
	
	\draw (v1) -- (v2) -- (v3) -- (v1)  
	(v2) --(v5)
	(v2) --(v6)
	(v1) --(v7) --(v8) --(v9) -- (v7);

	\begin{scope}[xshift=6cm] 	
	\node[vertex,draw] (v1) at  (0,0) 	{};
	\node[vertex,draw] (v2) at (1,1) 	{};
	\node[vertex,draw] (v3) at (1,-1) 	{};
	\node[vertex,draw] (v5) at  (3,1) 	{};
	\node[vertex,draw] (v6) at  (3,0) 	{};
	\node[vertex,draw] (v7) at  (2,1) 	{};
	\node[vertex,draw] (v8) at  (-1,1) 	{};
	\node[vertex,draw] (v9) at  (-1,-1) 	{};
	\node[vertex] 	(t) at (0,-1.5) {$G_{4}$};
	
	\draw (v1) -- (v2) -- (v3) -- (v1)  
	(v7) --(v5)
	(v7) --(v6)
	(v2) -- (v7)
	(v1) --(v8) --(v9) -- (v1);
	\end{scope}	
	\end{scope}
	\end{tikzpicture}
	\caption{Second pair of graphs}
	\label{figure: defensive allaince distincitve power2}
\end{figure}
The alliance polynomial of the two graphs in the Figure \ref{figure: defensive allaince distincitve power2} is:
\[
A(G_{3};x) = A(G_{4};x) = 8x^{9} + 26x^{8} + 20x^{7} + 11x^{6} + 2x^{5} + x^{4} \text{.}
\]
The defensive alliance polynomial of $G_{3}$ is:
\begin{align*}
da(G_{3};x,y) =& x^{8}y^{9} + 3x^{7}y^{9} + 2x^{7}y^{8} + 9x^{6}y^{8} + x^{6}y^{7} + x^{5}y^{9}\\
& + 7x^{5}y^{8} + 3x^{5}y^{7} + x^{5}y^{6} + 2x^{4}y^{9} + 3x^{4}y^{8}+ 5x^{4}y^{7}\\
& + 2x^{4}y^{6} + x^{3}y^{9} + 4x^{3}y^{8} + 5x^{3}y^{7} + x^{3}y^{6} + x^{2}y^{8}\\
&+ 4x^{2}y^{7} + 4x^{2}y^{6} + 2xy^{7} + 3xy^{6} + 2xy^{5} + xy^{4}
\text{.}
\end{align*}
The defensive alliance polynomial of $G_{4}$ is:
\begin{align*}
da(G_{4};x,y) =& x^{8}y^{9} + 3x^{7}y^{9} + 2x^{7}y^{8} + 2x^{6}y^{9} + 7x^{6}y^{8} + x^{6}y^{7}\\
& + x^{5}y^{9} + 7x^{5}y^{8}+ 3x^{5}y^{7} + x^{5}y^{6} + 5x^{4}y^{8} + 5x^{4}y^{7}\\
& + 2x^{4}y^{6} + x^{3}y^{9} + 4x^{3}y^{8} + 5x^{3}y^{7} + x^{3}y^{6} + x^{2}y^{8}\\
&+ 4x^{2}y^{7}+ 4x^{2}y^{6} + 2xy^{7} + 3xy^{6} + 2xy^{5} + xy^{4}
\text{.}
\end{align*}

\section{Bibliography}

\end{document}